\documentclass[11pt]{article}
\usepackage[a4paper,margin=1in]{geometry}
\usepackage[T1]{fontenc}
\usepackage{amsmath,amsthm,mathtools}
\usepackage[tt=false,type1=true]{libertine}
\usepackage[varqu]{zi4}
\usepackage[libertine]{newtxmath}
\usepackage{microtype}
\usepackage[hidelinks]{hyperref}
\usepackage{enumitem}
\usepackage{booktabs,array}

\newtheorem*{restatedinner}{\restatedname}
\newenvironment{restated}[2]{\def\restatedname{#1~\ref{#2}}\begin{restatedinner}}{\end{restatedinner}}
\newtheorem{theorem}{Theorem}
\newtheorem{corollary}[theorem]{Corollary}
\newtheorem{proposition}[theorem]{Proposition}
\newtheorem{lemma}[theorem]{Lemma}
\theoremstyle{definition}
\newtheorem{definition}[theorem]{Definition}
\newtheorem{example}[theorem]{Example}

\newcommand{\Sym}{\mathrm{Sym}}
\newcommand{\ZZ}{\mathbb{Z}}
\newcommand{\cB}{\mathcal{B}}
\newcommand{\cA}{\mathcal{A}}
\newcommand{\cC}{\mathcal{C}}
\newcommand{\cH}{\mathcal{H}}
\newcommand{\cF}{\mathcal{F}}
\newcommand{\cM}{\mathcal{M}}
\newcommand{\ord}{\operatorname{ord}}
\newcommand{\rB}{\mathbf{B}}
\newcommand{\EE}{\mathbb{E}}

\title{\bfseries A $G$-Invariant Erd\H{o}s--Hanani Theorem\\ with Applications to Optical Orthogonal and Constant-Weight Codes}
\author{Yeow Meng Chee\\[2pt]
\small Singapore University of Technology and Design}
\date{}

\begin{document}
\maketitle

\begin{abstract}
Let $k>t\ge 2$ be fixed integers, and let $G$ be a permutation group on a set
of $v$ points. A $G$-invariant packing is free if every block has $|G|$
distinct images under $G$. We give two conditions on $G$ under which, as
$v\to\infty$, there is a free $G$-invariant $t$-$(v,k,1)$ packing with
$(1-o(1))\binom{v}{t}/\binom{k}{t}$ blocks. This is a $G$-invariant form of
the Erd\H{o}s--Hanani theorem on asymptotically optimal packings. For semiregular
groups $G$, packings with these properties exist if and only if all but
$o(v^t)$ of the $t$-subsets of points have trivial setwise stabiliser. This
property is satisfied by every semiregular group for $t\ge 3$, and by every
cyclic semiregular group for $t=2$. As a consequence, the Johnson bound is
asymptotically attained at all lengths by optical orthogonal codes of
constant weight $w$ and constant correlation $\lambda$ with $w\ge\lambda+2$,
by their multidimensional and signature pattern versions, and by cyclic and
quasi-cyclic constant-weight codes. The same method gives a form of the
theorem for group divisible packings that are invariant under a cyclic
group. This determines the asymptotic maximum size of two-dimensional optical
orthogonal codes with at most one pulse per wavelength.
\end{abstract}

\noindent\textbf{Index Terms.} Erd\H{o}s--Hanani conjecture, packings,
permutation groups, hypergraph matchings, optical orthogonal
codes, constant-weight codes, Johnson bound.

\section{Introduction}\label{sec:introduction}

\emph{Optical orthogonal codes} (OOCs) were introduced by Chung, Salehi and Wei
\cite{ChungSalehiWei} as signature sequences for optical code-division
multiple access. Let $v>w>\lambda\ge 1$ be integers. A \emph{$(v,w,\lambda)$ OOC}
is a set $\cC\subseteq\{0,1\}^{v}$ of sequences, called \emph{codewords},
each of which has exactly $w$ entries equal to $1$. We write a codeword as
$x=(x_0,\dots,x_{v-1})$, and we take indices modulo $v$. The set $\cC$ is
required to have the following two properties. The \emph{autocorrelation
property} is
\begin{equation}\label{eq:auto}
 \sum_{i=0}^{v-1}x_ix_{i+a}\le\lambda
 ,\qquad\text{for all } x\in\cC \text{ and all } a\in\ZZ_v\setminus\{0\}.
\end{equation}
The \emph{cross-correlation property} is
\begin{equation}\label{eq:cross}
 \sum_{i=0}^{v-1}x_iy_{i+a}\le\lambda
 ,\qquad\text{for all distinct } x,y\in\cC \text{ and all } a\in\ZZ_v.
\end{equation}
The autocorrelation property allows a receiver to synchronise, and the
cross-correlation property allows it to separate users. Let $\Phi(v,w,\lambda)$ be the maximum number of codewords
in a $(v,w,\lambda)$ OOC.
The \emph{Johnson bound} \cite{ChungSalehiWei} states that
\begin{equation}\label{eq:johnson}
 \Phi(v,w,\lambda)\le\left\lfloor\frac{1}{w}\left\lfloor\frac{v-1}{w-1}
 \left\lfloor\frac{v-2}{w-2}\cdots\left\lfloor\frac{v-\lambda}{w-\lambda}
 \right\rfloor\cdots\right\rfloor\right\rfloor\right\rfloor .
\end{equation}

Omrani, Moreno and Kumar \cite{Omrani} improved \eqref{eq:johnson} for
certain ranges of the parameters in which $\lambda$ is large.
For $\lambda=1$, optimal codes are known for small weights
and many lengths, and for larger weights when the length lies in particular
congruence classes or has a particular algebraic form
\cite{FujiHaraMiao,HuczynskaNg,Yin}. For $\lambda=2$ and $w=4$, optimal codes
are known for infinitely many lengths \cite{FengChangJi}. For general
$\lambda\ge 2$, the known asymptotically optimal families come from finite geometries and finite fields. The lengths of these families have the form $v=q^{a}-1$ or
$v=(q^{a}-1)/(q-1)$, for a prime power $q$, or a similar algebraic form
\cite{AldersonMellinger,Moreno,Omrani}. Lower bounds of Gilbert--Varshamov
type hold for all lengths \cite{ChungSalehiWei}. Two-dimensional and multidimensional
OOCs \cite{Alderson,Omrani2D} and optical orthogonal
signature pattern codes \cite{JiDingWangGe,Kitayama} have analogous Johnson
bounds, and optimal or asymptotically optimal codes are known for particular
parameters. Whether the Johnson bound \eqref{eq:johnson} is asymptotically
attained at every length, for constant $w$ and $\lambda$, has remained an
open problem.

Let $V$ be a set of $v$ points and let $k\ge t\ge 1$ be
integers. A \emph{$t$-$(v,k,1)$ packing} is a family $\cB$ of $k$-subsets of
$V$, called \emph{blocks}, such that every $t$-subset of $V$ lies in at most
one block. Counting $t$-subsets gives
\begin{equation}\label{eq:trivial}
 |\cB|\le \binom{v}{t}\Big/\binom{k}{t}.
\end{equation}
We identify a codeword $x$ with its \emph{support} $C=\{i\in\ZZ_v : x_i=1\}$, which
is a $w$-subset of $\ZZ_v$. For $a\in\ZZ_v$, put $C+a=\{i+a : i\in C\}$.
Then \eqref{eq:auto} and \eqref{eq:cross} hold if and only if the sets $C+a$,
where $C$ is the support of a codeword and $a\in\ZZ_v$, are pairwise distinct
and form a $(\lambda+1)$-$(v,w,1)$ packing
\cite{FujiHaraMiao,Yin}. Let $G$ be a permutation group on $V$. We call a $G$-invariant packing
\emph{free} if every block has $|G|$ distinct images under $G$. The packing
formed by the sets $C+a$ is free and $\ZZ_v$-invariant, where $\ZZ_v$ acts
on itself by translation, and it has $v|\cC|$ blocks. With $t=\lambda+1$ and
$k=w$, the bound \eqref{eq:trivial} therefore gives
\begin{equation}\label{eq:ooc-upper}
 \Phi(v,w,\lambda)\ \le\ \frac{1}{v}\binom{v}{\lambda+1}\Big/\binom{w}{\lambda+1}
 =\frac{(v-1)(v-2)\cdots(v-\lambda)}{w(w-1)\cdots(w-\lambda)}.
\end{equation}
The bound \eqref{eq:johnson} is obtained from \eqref{eq:ooc-upper} by taking
integer parts at each step, and the two bounds agree asymptotically. We refer
to both \eqref{eq:johnson} and \eqref{eq:ooc-upper} as the Johnson bound.
Hence the Johnson bound is asymptotically attained if and only if there are
free $\ZZ_v$-invariant $(\lambda+1)$-$(v,w,1)$ packings with
$(1-o(1))\binom{v}{\lambda+1}/\binom{w}{\lambda+1}$ blocks. Multidimensional
OOCs correspond to free packings that are invariant under a cyclic group,
which acts by translation of one coordinate of a product. Optical orthogonal
signature pattern codes correspond to free packings that are invariant under
a product of two cyclic groups, which acts on itself by translation. For OOCs,
multidimensional OOCs and optical orthogonal signature pattern codes, the
group acts \emph{semiregularly}, that is, no element of the group other than
the identity maps a point to itself.

For packings with no prescribed group of automorphisms, the asymptotic
attainment of the bound \eqref{eq:trivial} is known. Erd\H{o}s and Hanani
\cite{ErdosHanani} conjectured that \eqref{eq:trivial} is
asymptotically attained when $k$ and $t$ are constant and $v\to\infty$, and
R\"odl \cite{Rodl} proved the conjecture. Frankl and R\"odl \cite{FranklRodl}
and Pippenger and Spencer \cite{PippengerSpencer} placed the result in the
setting of matchings in hypergraphs with small codegrees. Keevash
\cite{Keevash} later proved that equality holds in \eqref{eq:trivial} for all
large $v$ that satisfy the divisibility conditions. These theorems do not provide packings that are invariant under a
prescribed group. The asymptotic attainment of the Johnson bound for OOCs is a
case of the problem to decide whether the bound \eqref{eq:trivial} is
asymptotically attained by free $G$-invariant packings, for a prescribed
permutation group $G$ on $V$.

For semiregular groups $G$, we determine when the bound \eqref{eq:trivial}
is asymptotically attained by free $G$-invariant packings. Let $G$ be a
semiregular permutation group on $V$, and let $k>t\ge 2$ be constant. For $g\in G$ and
$A\subseteq V$, let $gA$ denote the set $\{gx : x\in A\}$. Let $\nu_t(G)$ be the number of
$t$-subsets $T$ of $V$ such that $gT=T$, for some element $g$ of $G$ other
than the identity. We prove that, for every
$\varepsilon>0$ and all large $v$, there is a free $G$-invariant $t$-$(v,k,1)$
packing with at least
\[
 \Bigl((1-\varepsilon)\binom{v}{t}-c\,\nu_t(G)\Bigr)\Big/\binom{k}{t}
\]
blocks (Theorem~\ref{thm:geh}). The number $c$ depends only on $k$ and $t$.
We also prove that a free $G$-invariant
$t$-$(v,k,1)$ packing has at most $\bigl(\binom{v}{t}-\nu_t(G)\bigr)/\binom{k}{t}$
blocks. Hence the bound \eqref{eq:trivial} is asymptotically attained by
free $G$-invariant packings if and only if $\nu_t(G)=o(v^t)$. This
condition holds for every semiregular group if $t\ge 3$. For $t=2$, we have
$\nu_2(G)=\iota(G)v/2$, where $\iota(G)$ is the number of elements of order
two in $G$, and the condition is $\iota(G)=o(v)$. The group $G$ may be
nonabelian, and its order may be any divisor of $v$. The groups that arise
from OOCs, multidimensional OOCs and optical orthogonal signature pattern
codes have at most three elements of order two. Consequently, the Johnson
bound is asymptotically attained at all lengths by OOCs, by multidimensional
OOCs and by optical orthogonal signature pattern codes, for constant $w$ and
$\lambda$ with $w\ge\lambda+2$. Moreover, cyclic and quasi-cyclic binary
codes of constant weight $w$ and minimum distance $2\delta$ attain
asymptotically the maximum size of a binary code with these parameters, for
constant $w$ and $\delta$ with $2\le\delta\le w-1$.

A $G$-invariant packing is a union of orbits of $G$ on the $k$-subsets of
$V$. Our proof selects these orbits
by means of a matching in a hypergraph, as in the proofs of Frankl and
R\"odl and of Pippenger and Spencer. The vertices of the hypergraph are the
orbits of the $t$-subsets of $V$ that have $|G|$ distinct images. There is
one edge for each orbit of $k$-subsets $B$ such that $B$ and $gB$ have fewer
than $t$ common points, for all $g\in G$ other than the identity. The edge
consists of the orbits of the $t$-subsets of $B$. A matching with $n$ edges
yields a free $G$-invariant packing with $n|G|$ blocks. The theorem of
Kahn \cite{Kahn} on the chromatic index of hypergraphs with small codegrees
gives a large matching relative to the number of edges and the maximum
degree. Our
application has two tasks, and a nontrivial group is an obstacle to each of
them. The first task is to show that almost all $k$-subsets give edges. A $k$-subset $B$ gives no edge if $B$ and $gB$ have
at least $t$ common points, for some $g\in G$ other than the identity. The second task is to bound the codegrees. For $t$-subsets $T$ and
$S$ of $V$ in distinct orbits, only a small proportion of the $k$-subsets
that include $T$ may include an image of $S$, and $S$ has up to $|G|$
images. We prove a $G$-invariant Erd\H{o}s--Hanani
theorem for an arbitrary permutation group $G$. The lower bound and the hypothesis of this theorem are expressed by two parameters $\alpha_{k,t}(G)$ and
$\beta_{k,t}(G)$, which measure the first and the second obstacle,
respectively. For semiregular groups, we prove that
$\beta_{k,t}(G)=O(1/v)$ and $\alpha_{k,t}(G)=O(1/v+\nu_t(G)/v^t)$.

The Erd\H{o}s--Hanani theorem concerns packings of the $t$-subsets of a set
by $k$-subsets. OOCs of dimension two with at most one pulse per wavelength
correspond to packings of a different kind. The points are the elements of
$\ZZ_\Lambda\times\ZZ_T$. A subset of $\ZZ_\Lambda\times\ZZ_T$ is a
\emph{transversal} if it has at most one point in each of the sets
$\{i\}\times\ZZ_T$. Each block is a transversal $k$-subset, and every
transversal $t$-subset lies in at most one block. A packing with these two
properties is a \emph{group divisible packing}. A group divisible packing has at most
$\binom{\Lambda}{t}T^{t}/\binom{k}{t}$ blocks. The group $\ZZ_T$ acts on
$\ZZ_\Lambda\times\ZZ_T$ by translation of the second coordinate.
We show that there are $\ZZ_T$-invariant group divisible
packings with $(1-o(1))\binom{\Lambda}{t}T^{t}/\binom{k}{t}$ blocks, uniformly in $\Lambda\ge k$ and $T$ (Theorem~\ref{thm:gd}). The proof uses a matching in the hypergraph whose
edges come from the orbits of transversal $k$-subsets.
This result gives the asymptotic maximum size of OOCs with at
most one pulse per wavelength.

\section{Group Actions and Main Results}\label{sec:main}

Throughout, $V$ is a set of $v$ points, $G\le\Sym(V)$ is a permutation
group, and $m=|G|$. We write $1$ for the identity of $G$. For $g\in G$ and
$A\subseteq V$, put $gA=\{gx : x\in A\}$. Thus $G$ acts on the set of all
subsets of $V$. The sets $gA$, $g\in G$, are the \emph{images} of $A$. The set of all images of $A$ is the \emph{orbit} of $A$, and the orbit of a
point $x\in V$ is the set $\{gx : g\in G\}$. The subgroup $\{g\in G : gA=A\}$ is the \emph{stabiliser} of $A$. A subset
$A\subseteq V$ is \emph{free} if the stabiliser of $A$ is trivial. The orbit
of a free set has size $m$. For an integer $t\ge 1$, the number of
$t$-subsets of $V$ that are not free is $\nu_t(G)$. Let $\cA$ be a family of subsets of $V$ that is a union of orbits. A
\emph{set of orbit representatives} of $\cA$ is a subfamily of $\cA$ that
contains exactly one member of each orbit included in $\cA$.

Let $k\ge t\ge 1$ be integers. A packing $\cB$ is \emph{$G$-invariant} if
$gB\in\cB$, for all $B\in\cB$ and all $g\in G$. The packing $\cB$ is free if it is $G$-invariant and every block is a free set. Hence a free $G$-invariant
packing $\cB$ is a union of $G$-orbits of size $m$. The members of a set of orbit representatives of $\cB$ are called
\emph{base blocks}, and their number is $|\cB|/m$. For $G=\ZZ_v$ acting on itself by translation, the free
$G$-invariant $(\lambda+1)$-$(v,w,1)$ packings are the packings that
correspond to $(v,w,\lambda)$ OOCs. Let $P_G(v,k,t)$ be the maximum number of
blocks in a free $G$-invariant $t$-$(v,k,1)$ packing on $V$. Every lower
bound for $P_G(v,k,t)$ is a lower bound for the maximum number of blocks in a
$G$-invariant $t$-$(v,k,1)$ packing.

Recall that $G$ is semiregular if $gx\ne x$, for all $x\in V$ and all
$g\in G\setminus\{1\}$. If $G$ is semiregular, then every orbit of $G$ on $V$
has size $m$, so $m$ divides $v$. For $x,y\in V$ there is at most one
$g\in G$ with $gx=y$, and every cycle of an element $g\in G\setminus\{1\}$ on
$V$ has length $\ord(g)$. A semiregular group is \emph{regular} if $m=v$. An
\emph{involution} is an element of order two, and $\iota(G)$ is the number of
involutions in $G$.

\begin{lemma}\label{lem:nu}
Let $G$ be semiregular. Then $\nu_2(G)=\iota(G)v/2$, and
$\nu_t(G)\le v^{t-1}$, for $t\ge 3$.
\end{lemma}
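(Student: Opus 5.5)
For $t=2$ the plan is to count pairs $(T,g)$ with $T$ a $2$-subset, $g\in G\setminus\{1\}$ and $gT=T$. If $gT=T$ with $T=\{x,y\}$, then either $g$ fixes both points, which semiregularity forbids, or $g$ swaps $x$ and $y$. In the second case $g^2$ fixes $x$, so $g^2=1$ and $g$ is an involution. Conversely, each involution $g$ acts on $V$ with every cycle of length $\ord(g)=2$. Its cycles therefore partition $V$ into $v/2$ pairs, and these are exactly the $2$-subsets it stabilises.

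To finish the case $t=2$, I check that each non-free $2$-subset is stabilised by exactly one non-identity element. If $g$ and $h$ both swap $x$ and $y$, then $gx=hx$, and since at most one element of $G$ maps $x$ to $y$, we get $g=h$. The pairs $(T,g)$ thus biject with the non-free $2$-subsets, and there are $\iota(G)\cdot v/2$ of them. This gives $\nu_2(G)=\iota(G)v/2$.

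For $t\ge 3$ I would bound $\nu_t(G)$ by a union over group elements. Fix $g\ne1$ with $gT=T$. Then $T$ is a union of cycles of $g$, each of length $\ord(g)=d\ge2$. So $d\mid t$, and $T$ is a union of $t/d$ of the $v/d$ cycles. The number of such $T$ is $\binom{v/d}{t/d}\le (v/d)^{t/d}/(t/d)!$. Summing over all $g$ gives the bound
\[
\nu_t(G)\le \sum_{d\ge 2,\ d\mid t} N_d\binom{v/d}{t/d},
\]
where $N_d\le m\le v$ is the number of elements of order $d$. For $d\ge 2$ we have $t/d\le t/2$, so each term is at most $v\cdot v^{t/2}$. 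When $t\ge 4$ this is at most $v^{t-1}$ in order of magnitude, but summing over the divisors $d$ may cost a constant, so the estimate needs care.

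A cleaner route avoids the sum. Choose a point $x\in T$ and a second point $y\in T\setminus\{x\}$ in the same $g$-cycle. This is possible since $d\ge2$, and then $y=g^jx$ for some $j$. A non-free $T$ contains such $x,y$ with $y=hx$ for some $h\ne 1$, and $h$ is determined by the pair $(x,y)$. Map each non-free $T$ to this data: the ordered pair $(x,y)$ with $y$ in the $G$-orbit of $x$, together with the remaining $t-2$ points. The count is then at most $v(m-1)\binom{v}{t-2}$, which is roughly $v^{t}/(t-2)!$. This is too weak, so I would refine it by using that $T$ is a union of $g$-cycles.

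The refinement is to pick $x\in T$ as the smallest point of $T$ in a fixed ordering of $V$, and an element $g\ne1$ with $gT=T$. Then $T$ is determined by $g$ together with one representative from each of its $t/d\le t/2$ cycles under $\langle g\rangle$. Choosing $g$ gives at most $m-1<v$ options, and choosing the representatives gives at most $\binom{v}{\lfloor t/2\rfloor}$ options. Hence $\nu_t(G)\le v\binom{v}{\lfloor t/2\rfloor}\le v^{1+\lfloor t/2\rfloor}/\lfloor t/2\rfloor!$, a set of at most $\lfloor t/2\rfloor$ representatives suffices regardless of $d$. For $t\ge3$ we have $1+\lfloor t/2\rfloor\le t-1$; for $t=3$ both sides equal $2$. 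The bound $\nu_t(G)\le v^{t-1}$ follows. The main obstacle is this bookkeeping: making sure that one element $g$ and at most $\lfloor t/2\rfloor$ points really determine $T$ injectively enough, without paying constants for the different orders $d$.
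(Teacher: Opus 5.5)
Your proof is correct and follows the paper's argument. For $t=2$ you identify the non-free pairs with the $2$-cycles of involutions and use semiregularity to show that each pair comes from only one involution; for $t\ge 3$ you determine $T$ by a nonidentity $g$ (fewer than $v$ choices) and one point from each of its at most $\lfloor t/2\rfloor$ cycles in $T$, which gives $\nu_t(G)\le v^{1+\lfloor t/2\rfloor}\le v^{t-1}$. The detours (the ordered pair $(x,y)$ and the smallest point of $T$) are not needed, and your worry about losing constants over the different orders $d$ is unfounded, since the nonidentity elements of all orders together number fewer than $v$; bounding the representatives for a fixed $g$ by $v^{t/d}\le v^{\lfloor t/2\rfloor}$ also removes any need for the monotonicity of $\binom{v}{j}$ in $j$.
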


\begin{proof}
A $2$-subset $\{x,y\}$ fails to be free if and only if some $g\in G\setminus\{1\}$
interchanges $x$ and $y$. Such an element is an involution. Each involution
$g$ yields the $v/2$ pairs $\{x,gx\}$. Different involutions yield different
pairs, because at most one element of $G$ maps $x$ to $gx$. Hence
$\nu_2(G)=\iota(G)v/2$.

Let $t\ge 3$, and let $T$ be a $t$-subset of $V$ with $gT=T$, for some
$g\in G\setminus\{1\}$. Then $T$ is a union of cycles of $g$, and each cycle has length
$\ord(g)\ge 2$. Hence $T$ is determined by $g$ and by one point from each
of these cycles. There are at most $m\le v$ elements $g$ and at most
$\lfloor t/2\rfloor$ cycles. Therefore
$\nu_t(G)\le v^{1+\lfloor t/2\rfloor}\le v^{t-1}$.
\end{proof}

 No block of a free
$G$-invariant $t$-$(v,k,1)$ packing contains a nonfree $t$-subset. This gives the following upper bound.

\begin{proposition}\label{prop:pairs}
Let $k\ge t\ge 1$ be integers and let $G\le\Sym(V)$. Then
\[
 P_G(v,k,t)\ \le\ \Bigl(\binom{v}{t}-\nu_t(G)\Bigr)\Big/\binom{k}{t}.
\]
\end{proposition}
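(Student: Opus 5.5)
The plan is a double count of $t$-subsets, using the observation made just before the statement. Let $\cB$ be a free $G$-invariant $t$-$(v,k,1)$ packing. First I will show that every $t$-subset $T$ of a block $B\in\cB$ is free. Suppose $g\in G\setminus\{1\}$ satisfies $gT=T$. Since $\cB$ is $G$-invariant, $gB\in\cB$. Since $\cB$ is free, $B$ has trivial stabiliser, so $gB\ne B$. The set $T$ lies in both $B$ and $gB$, because $T=gT\subseteq gB$. This contradicts the packing condition that $T$ lies in at most one block. Hence every $t$-subset of every block is free.

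Next I count pairs $(T,B)$ with $B\in\cB$ and $T$ a $t$-subset of $B$. There are exactly $|\cB|\binom{k}{t}$ such pairs. Each $T$ occurs in at most one pair, by the packing property. By the first step, each such $T$ is one of the $\binom{v}{t}-\nu_t(G)$ free $t$-subsets of $V$. Therefore $|\cB|\binom{k}{t}\le\binom{v}{t}-\nu_t(G)$.

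Finally I take $\cB$ of maximum size $P_G(v,k,t)$, which gives the bound. There is no real obstacle here. The only point that needs care is in the first step: freeness of $B$ is what guarantees that $gB$ is a block different from $B$. It is not enough to know only that $gB$ is a block.
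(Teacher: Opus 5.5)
Your proposal is correct and follows essentially the same route as the paper's proof. Both use the freeness of $B$ to get $gB\ne B$, conclude that no block contains a non-free $t$-subset, and then count the $t$-subsets covered by the blocks.
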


The bound in Proposition~\ref{prop:pairs} holds for free packings only,
because a block that is not free may include a $t$-subset that is not free.
A $k$-subset $B$ of $V$ is a block of some free $G$-invariant $t$-$(v,k,1)$
packing only if
\begin{equation}\label{eq:sep}
 |B\cap gB|<t,\qquad\text{for all } g\in G\setminus\{1\}.
\end{equation}
Indeed, the images $gB$, $g\in G$, are then pairwise distinct blocks of the
packing. For every
$k$-subset $B$ of $V$, put
\[
 N(B)=\sum_{g\in G\setminus\{1\}}\binom{|B\cap gB|}{t}.
\]
Thus $N(B)$ is the number of pairs $(g,T)$ such that $g\in G\setminus\{1\}$
and $T$ is a $t$-subset of $B\cap gB$, and $N(B)=0$ if and only if $B$
satisfies \eqref{eq:sep}. If $N(B)=0$, then the images $gB$, $g\in G$, are
pairwise distinct and form a free $G$-invariant $t$-$(v,k,1)$ packing.
Let $\rB$ be
a $k$-subset of $V$ chosen uniformly at random. Throughout, the bold letter
$\rB$ denotes this random set, and $B$ denotes a particular $k$-subset of
$V$. Put
\begin{equation}\label{eq:alpha}
 \alpha_{k,t}(G)=\EE\bigl[N(\rB)\bigr].
\end{equation}
Since $N(\rB)$ is a nonnegative integer, Markov's inequality gives
\[
 \Pr\bigl[N(\rB)\ne 0\bigr]=\Pr\bigl[N(\rB)\ge 1\bigr]
 \le\EE\bigl[N(\rB)\bigr]=\alpha_{k,t}(G).
\]
Hence $N(B)=0$, for all but at most $\alpha_{k,t}(G)\binom{v}{k}$ of the
$k$-subsets $B$ of $V$.

Let $T$ be a $t$-subset of $V$ that is not free, and let
$g\in G\setminus\{1\}$ with $gT=T$. If $T\subseteq B$, then
$T\subseteq B\cap gB$. Hence $N(B)$ is at least the number of $t$-subsets of
$B$ that are not free. Since
$\Pr[T\subseteq\rB]=\binom{k}{t}/\binom{v}{t}$, we obtain
\begin{equation}\label{eq:alpha-nu}
 \alpha_{k,t}(G)\ \ge\ \binom{k}{t}\,\nu_t(G)\Big/\binom{v}{t}.
\end{equation}
Consequently, if $\nu_t(G)\ge c\binom{v}{t}$, for a constant $c>0$, then
$P_G(v,k,t)\le(1-c)\binom{v}{t}/\binom{k}{t}$ by
Proposition~\ref{prop:pairs}, and $\alpha_{k,t}(G)\ge c$ by
\eqref{eq:alpha-nu}.

Let $T$ and $S$ be $t$-subsets of $V$ in distinct orbits of $G$. Our proof requires that, among
the $k$-subsets of $V$ that include $T$, only a small proportion also include
an image of $S$. For $t$-subsets $T$ and $S$ of $V$ and a $k$-subset $B$, let
$M_{T,S}(B)$ be the number of elements $g\in G$ such that $gS\subseteq B$ and
$gS\ne T$. Put
\begin{equation}\label{eq:beta}
 \beta_{k,t}(G)=\max_{T,S}\ \EE\bigl[M_{T,S}(\rB)\,\big|\,T\subseteq\rB\bigr],
\end{equation}
where $T$ and $S$ range over the $t$-subsets of $V$. Let $T$ and $S$ lie in
distinct orbits. Then $gS\ne T$, for all $g\in G$, and so $M_{T,S}(B)\ge 1$
for every $k$-subset $B$ that includes an image of $S$. There are
$\binom{v-t}{k-t}$ $k$-subsets of $V$ that include $T$, and $\rB$ is uniformly
distributed over them when $T\subseteq\rB$. Hence the number of $k$-subsets
of $V$ that include $T$ and an image of $S$ is at most
\begin{equation}\label{eq:codeg}
 \sum_{B\supseteq T}M_{T,S}(B)
 =\binom{v-t}{k-t}\,\EE\bigl[M_{T,S}(\rB)\,\big|\,T\subseteq\rB\bigr]
 \le\beta_{k,t}(G)\binom{v-t}{k-t}.
\end{equation}

\begin{theorem}[$G$-invariant Erd\H{o}s--Hanani theorem]\label{thm:general}
Let $k>t\ge 2$ be integers and let $\varepsilon>0$. There exists a number
$\delta=\delta(k,t,\varepsilon)>0$ with the following property. Let $V$ be a
set of $v\ge k$ points, and let $G\le\Sym(V)$ be a permutation group with
$\beta_{k,t}(G)\le\delta$. Then
\[
 P_G(v,k,t)\ \ge\ \bigl(1-\varepsilon-\alpha_{k,t}(G)\bigr)
 \binom{v}{t}\Big/\binom{k}{t}.
\]
\end{theorem}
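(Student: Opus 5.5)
We build the hypergraph $\cH$ described in the introduction and apply a Pippenger--Spencer/Kahn type matching theorem to it. The vertices of $\cH$ are the $G$-orbits of free $t$-subsets of $V$. Let $\cF$ be the family of $k$-subsets $B$ with $N(B)=0$; it is a union of orbits of size $m$, since $N(gB)=N(B)$. For each orbit in $\cF$ we add one edge, namely the set of orbits of the $t$-subsets of a representative $B$. Since $N(B)=0$, every $t$-subset $T\subseteq B$ is free, and distinct $t$-subsets of $B$ lie in distinct orbits: if $gT=T'\subseteq B$ with $g\ne1$, then $T'\subseteq B\cap gB$. Hence every edge has exactly $\binom{k}{t}$ vertices. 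If the edges $e_1,\dots,e_n$ form a matching with representatives $B_1,\dots,B_n$, then the family $\{gB_i\}$ is a free $G$-invariant packing with $nm$ blocks. Indeed, a $t$-set lying in both $gB_i$ and $hB_j$ gives a common vertex of $e_i$ and $e_j$, so $i=j$. Then $N(B_i)=0$ forces $g=h$, because two distinct images of $B_i$ share fewer than $t$ points.

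Next we count degrees. For a free $t$-set $T$, the degree of its orbit $[T]$ is the number of $\cF$-orbits whose members meet $[T]$. Each such orbit contains exactly one block containing $T$: if both $B$ and $gB$ contain $T$, then $g^{-1}T\subseteq B$, and the distinctness shown above forces $g=1$. So $\deg[T]=|\{B\in\cF: B\supseteq T\}|\le D:=\binom{v-t}{k-t}$. The codegree of $[T]$ and $[S]$, for distinct orbits, is at most the number of $k$-sets containing $T$ and some image of $S$. By \eqref{eq:codeg} this is at most $\beta_{k,t}(G)D\le\delta D$.

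We then apply the matching form of Kahn's theorem, or the Pippenger--Spencer bound in its non-regular version. For a $\binom{k}{t}$-uniform hypergraph with maximum degree at most $D$ and codegrees at most $\delta D$, where $\delta$ is small in terms of $k$, $t$ and $\varepsilon$, there is a matching with at least $(1-\varepsilon)|E|/D$ edges. Equivalently, the chromatic index is at most $(1+\varepsilon')D$, and we take the largest colour class. This is the only deep input, and we would quote it. We need the version that requires only an upper bound on degrees, not near-regularity, which is exactly what Kahn's chromatic-index theorem provides.

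It remains to count edges. We have $|E|=|\cF|/m$ and $|\cF|\ge(1-\alpha_{k,t}(G))\binom{v}{k}$ by Markov's inequality, as noted above. The packing therefore has at least
\[
m(1-\varepsilon)\frac{|\cF|}{mD}\ \ge\ (1-\varepsilon)(1-\alpha_{k,t}(G))\binom{v}{k}\Big/\binom{v-t}{k-t}
\]
blocks. Since $\binom{v}{k}/\binom{v-t}{k-t}=\binom{v}{t}/\binom{k}{t}$, and $(1-\varepsilon)(1-\alpha)\ge1-\varepsilon-\alpha$, the bound of the theorem follows. The main obstacle is making sure that the cited matching theorem applies for all $v\ge k$, with $\delta$ depending only on $k$, $t$ and $\varepsilon$, and without a lower bound on $D$.

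If $D$ is small, the hypothesis $\beta_{k,t}(G)\le\delta<1$ still forces all codegrees below $1$, that is, codegrees equal to $0$. In that case the hypergraph has no two intersecting edges, so $E$ itself is a matching, and the bound holds trivially. We would check that Kahn's theorem in the form ``codegree $\le\delta D$ implies $\chi'\le(1+\varepsilon)D$'' is uniform in $D$ in this sense. If it is not, we would absorb small $D$ into this trivial case.
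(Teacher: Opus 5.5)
Your proposal is correct and follows the paper's proof essentially step for step. It uses the same orbit hypergraph, with one edge per orbit of $t$-separated $k$-sets, the degree bound $D=\binom{v-t}{k-t}$ via the unique representative containing $T$, the codegree bound $\beta_{k,t}(G)D$ from \eqref{eq:codeg}, Kahn's theorem, and Markov's inequality for $|\cF|$; the paper only packages these steps as Lemma~\ref{lem:sep-basic}, Lemma~\ref{lem:matching-packing} and Proposition~\ref{prop:restricted}. Your worry about small $D$ is unnecessary, because Theorem~\ref{thm:kahn} is stated with no lower bound on $D$; in any case, codegree $0$ does not by itself rule out intersecting edges, but here every edge has $\binom{k}{t}\ge 3$ vertices, so $\delta D<1$ forces the edge set to be empty.
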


For the trivial group, $\alpha_{k,t}(G)=0$ and $\beta_{k,t}(G)\le k/v$, and
Theorem~\ref{thm:general} is R\"odl's theorem. For semiregular groups,
Theorem~\ref{thm:general} gives a lower bound of the same form as the upper
bound in Proposition~\ref{prop:pairs}.

\begin{theorem}[Semiregular groups]\label{thm:geh}
Let $k>t\ge 2$ be integers and let $\varepsilon>0$. There exist an integer
$v_0=v_0(k,t,\varepsilon)$ and a number $c=c(k,t)>0$ with the following
property. Let $G$ be a semiregular permutation group on a set of $v\ge v_0$
points. Then
\[
 P_G(v,k,t)\ \ge\ \Bigl((1-\varepsilon)\binom{v}{t}-c\,\nu_t(G)\Bigr)
 \Big/\binom{k}{t}.
\]
\end{theorem}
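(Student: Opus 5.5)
Theorem~\ref{thm:geh} will follow from Theorem~\ref{thm:general} once two estimates are established for a semiregular group $G$: $\beta_{k,t}(G)=O(1/v)$ and $\alpha_{k,t}(G)\le C/v+C\,\nu_t(G)/\binom{v}{t}$, with $C=C(k,t)$. Given these, choose $v_0$ so that $O(1/v_0)\le\delta(k,t,\varepsilon/2)$ and $C/v_0\le\varepsilon/2$. Theorem~\ref{thm:general} with $\varepsilon/2$ then gives
\[
P_G(v,k,t)\ge\bigl(1-\varepsilon-C\nu_t(G)/\tbinom{v}{t}\bigr)\tbinom{v}{t}\big/\tbinom{k}{t},
\]
which is the claim with $c=C$.

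\emph{Bounding $\beta$.} Fix $t$-subsets $T$ and $S$, and condition on $T\subseteq\rB$. Then $\rB\setminus T$ is a uniform $(k-t)$-subset of $V\setminus T$. For each $g$ with $gS\ne T$, the set $gS\setminus T$ has size $j\ge1$, and $\Pr[gS\subseteq\rB\mid T\subseteq\rB]\le(k/(v-t))^{j}$. Group the $g$ by the part $gS\cap T$. Suppose $gS$ meets $T$, say $gs=x$ with $s\in S$ and $x\in T$. Semiregularity forces $g$ to be the unique element mapping $s$ to $x$, so at most $t^2$ elements $g$ have $gS\cap T\neq\emptyset$. Each of these contributes at most $k/(v-t)$. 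The remaining $g$, with $gS\cap T=\emptyset$, number at most $m\le v$, and each contributes at most $(k/(v-t))^{t}\le (k/(v-t))^2$ since $t\ge2$. The total is $O(1/v)$, uniformly in $T$ and $S$.

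\emph{Bounding $\alpha$.} Write $\alpha_{k,t}(G)=\sum_{g\ne1}\EE\binom{|\rB\cap g\rB|}{t}$. This is the sum over $g\ne1$ and $t$-subsets $T$ of $\Pr[T\subseteq\rB\text{ and }g^{-1}T\subseteq\rB]$, that is, of $\Pr[T\cup g^{-1}T\subseteq\rB]\le (k/v)^{|T\cup g^{-1}T|}$ up to constants. Split the pairs $(g,T)$ according to $u=|T\cup g^{-1}T|$, which ranges from $t$ to $2t$.

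The case $u=t$ means $g^{-1}T=T$, so $T$ is not free. The pairs with $u=t$ number at most $m\,\nu_t(G)$? That is too many, since $m$ can be of order $v$. Instead count them as at most $\nu_t(G)$ times the number of $g$ fixing $T$. The stabiliser of $T$ acts semiregularly on $T$, so its order is at most $t$. This case therefore contributes $O(\nu_t(G)/\binom{v}{t})$.

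For $u>t$, count the pairs $(g,T)$ with $|T\cup g^{-1}T|=u$. The set $T\cup g^{-1}T$ is determined by $g$ together with a structure in which points of $T\cap g^{-1}T$ are linked along $g$-chains. Every point of $T$ lies in a $g$-path within $T\cup g^{-1}T$, so the whole union is generated by $g$ and at most $u-t$ starting points. The choices for $g$ and the starting points number at most $v\cdot v^{u-t}\cdot O(1)$. The contribution is then $O(v^{u-t+1}\cdot v^{-u})=O(v^{1-t})=O(1/v)$ for $t\ge2$.

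The hard step is this last counting of pairs $(g,T)$ with a given overlap pattern. It needs care to show that $T\cup g^{-1}T$ is determined by $g$ and $u-t$ free points, uniformly over the bounded number of patterns. This is the main obstacle, and I would handle it by decomposing $T$ into maximal segments $x,gx,\dots$ of $g$-orbits. Each maximal segment contributes exactly one point of $g^{-1}T\setminus T$ unless it closes into a full cycle. Full cycles lie inside the set $T$ itself and are handled like the nonfree case, or contribute an extra factor of $1/v$.
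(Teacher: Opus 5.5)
Your reduction to Theorem~\ref{thm:general}, your bound on $\beta_{k,t}(G)$, and your case $u=t$ are sound. Bounding the number of $g\ne1$ that fix a nonfree $T$ by $t-1$, because the stabiliser acts semiregularly on $T$, is a clean alternative to the paper's route. The paper treats these pairs with the same path/cycle count as the others and, for $t=2$, converts the involution count into $\nu_2(G)$ via Lemma~\ref{lem:nu}.

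The gap is in the case $u>t$. The claim that $T\cup g^{-1}T$ is determined by $g$ and $u-t$ starting points is false whenever $T$ contains a complete cycle of $g$ without being $g$-invariant. For example, let $t=3$ and let $G=\ZZ_2^a$ act on itself by translation. Take $g\ne0$ and $T=\{x,x+g,y\}$ with $y\notin\{x,x+g\}$. Then $u=4$, but there are $(v/2)(v-2)$ such $T$ for each $g$, not $O(v)$. Summed over $g$, their contribution to $\alpha_{k,3}(G)$ is of order $1/v$, not $O(v^{1-t})=O(v^{-2})$. Your patch does not repair this. These $T$ are free (here $\nu_3(G)=0$), so they cannot be charged to the $\nu_t(G)$ term. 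Also, a complete cycle adds a factor $v$ to the count, one extra free point not counted in $u-t$, rather than a factor $1/v$.

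The missing ingredient is a bound on the number $q$ of complete cycles inside $T$. Let $p=u-t$ be the number of maximal segments (including singletons). For a fixed pattern there are at most $v^{1+p+q}$ pairs $(g,T)$, each contributing at most $(k/v)^{t+p}$, so the total is $O(v^{1+q-t})$. If $u>t$, then $p\ge1$, so at least one point of $T$ lies outside the cycles. Each cycle has exactly $\ord(g)\ge2$ points, so $2q\le t-1$, hence $q\le t-2$, and the contribution is $O(1/v)$. That is all Theorem~\ref{thm:general} needs.

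This is the bookkeeping in the paper's proof of Lemma~\ref{lem:semireg}, which uses $q\le\lfloor t/2\rfloor$ directly and does not split off $u=t$. With this correction your argument goes through. You should replace the stated intermediate bound $O(v^{1-t})$ by $O(1/v)$, since the former is false for $t\ge3$.
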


By Proposition~\ref{prop:pairs} and Theorem~\ref{thm:geh}, a sequence of
semiregular groups $G$ of degree $v\to\infty$ satisfies
$P_G(v,k,t)=(1-o(1))\binom{v}{t}/\binom{k}{t}$ if and only if
$\nu_t(G)=o(v^t)$. By Lemma~\ref{lem:nu}, this condition holds for every
semiregular group if $t\ge 3$, and for $t=2$ it is the condition
$\iota(G)=o(v)$. The following example shows that the condition
$\nu_2(G)=o(v^2)$ fails for some regular groups.

\begin{example}\label{ex:elementary}
Let $G=\ZZ_2^{a}$ act on itself by translation, so that $v=2^{a}$. Every
$2$-subset $\{x,y\}$ of $G$ is mapped to itself by the translation by $x+y$.
Hence $\nu_2(G)=\binom{v}{2}$, and
Proposition~\ref{prop:pairs} gives $P_G(v,k,2)=0$, for all $k\ge 2$. Let $k\ge 3$ be odd, and
let $B$ be a block of a $G$-invariant $2$-$(v,k,1)$ packing. The stabiliser of $B$ is a $2$-group. The order of the stabiliser divides $k$, because $B$ is a union of orbits of its stabiliser on $V$, and each of these orbits has as many points as the stabiliser has elements. Hence the stabiliser of $B$ is trivial. Thus the packing is free, and it is
empty by Proposition~\ref{prop:pairs}. Therefore every
$G$-invariant $2$-$(v,k,1)$ packing is empty, whereas $2$-$(v,k,1)$ packings
with $(1-o(1))\binom{v}{2}/\binom{k}{2}$ blocks exist.
\end{example}

\section{Orbit Hypergraphs}\label{sec:orbit}

A \emph{hypergraph} $\cH$ consists of a vertex set and a family of edges,
each of which is a nonempty set of vertices. Repeated edges are allowed. The
\emph{degree} of a vertex is the number of edges that contain it. The
\emph{codegree} of two distinct vertices is the number of edges that contain
both. A \emph{matching} is a set of pairwise disjoint edges. We use the
following theorem of Kahn \cite{Kahn}.

\begin{theorem}[Kahn]\label{thm:kahn}
For every integer $s\ge 2$ and every $\varepsilon>0$ there exists $\delta>0$
with the following property. Let $\cH$ be a hypergraph in which every edge
has at most $s$ vertices, every vertex has degree at most $D$, and every two
distinct vertices have codegree at most $\delta D$. Then the edges of $\cH$
can be partitioned into at most $(1+\varepsilon)D$ matchings. In particular,
$\cH$ has a matching with at least $|E(\cH)|/((1+\varepsilon)D)$ edges.
\end{theorem}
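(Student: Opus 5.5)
The second assertion follows at once from the first: if the edges of $\cH$ are partitioned into at most $(1+\varepsilon)D$ matchings, then by pigeonhole one of them has at least $|E(\cH)|/((1+\varepsilon)D)$ edges. So the work lies in the edge-colouring statement. The plan is to follow the route of Pippenger and Spencer: first reduce to a nearly regular hypergraph, then colour by a semi-random nibble in the spirit of R\"odl, and finish greedily.

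\emph{Reduction to near-regularity.} Remove repeated edges whose multiplicity is large. Multiplicity at most $\delta D$ is already forced by the codegree bound whenever $s\ge 2$ and the edge has at least two vertices. Edges with one vertex can be added to the matchings at the end. Next, embed $\cH$ into a hypergraph $\cH'$ in which every vertex has degree between $(1-\varepsilon)D$ and $D$, every edge has at most $s$ vertices, and codegrees are still at most $\delta D$. The intended construction takes many disjoint copies of $\cH$ and adds new edges, each meeting distinct copies in vertices of deficient degree. Added edges may need padding by fresh dummy vertices, so that they contribute codegree at most $1$. A colouring of $\cH'$ restricts to a colouring of $\cH$.

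\emph{Colour nibble.} Fix a palette of $(1+\varepsilon/2)D$ colours and proceed in rounds. In each round, every uncoloured edge is activated with a small probability $\theta$ and proposes a uniformly random colour from its current list. The proposal is kept if no other activated edge meeting it proposes the same colour. Colours that are kept are then deleted from the lists of intersecting edges. The variables to track for each vertex $x$ and colour $c$ are:
\begin{itemize}[nosep]
\item the uncoloured degree of $x$;
\item whether $c$ is still available at $x$;
\item the number of uncoloured edges at $x$ for which $c$ is available.
\end{itemize}
I would show that in each round these quantities change by their expected factors up to a factor $(1\pm\eta)$, with $\eta$ tiny compared with $\theta$. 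The expectation computation uses near-regularity and the bound $s$ on edge size. Concentration will come from Talagrand's inequality, or from Azuma's inequality applied to a suitably truncated variable. The codegree hypothesis $\delta D$ is exactly what makes the dependence between distinct vertices negligible.

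\emph{Finishing.} After a bounded number of rounds (depending on $s$ and $\varepsilon$), the uncoloured edges form a hypergraph of maximum degree at most $\varepsilon D/(4s)$. Any edge meets at most $s\cdot\varepsilon D/(4s)$ others, so these edges can be coloured greedily with at most $\varepsilon D/4+1$ further colours. Altogether this uses at most $(1+\varepsilon)D$ colours for large $D$; for small $D$ the codegree condition forces $D<1/\delta$, and the statement is trivial or vacuous there.

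\emph{Main obstacle.} The hardest step is keeping the quasi-random invariants under control through all the rounds, so that errors of order $\eta$ do not accumulate beyond $\varepsilon$. This forces $\theta$ and $\eta$ to be chosen as functions of $s$ and $\varepsilon$, and then $\delta$ small enough that concentration holds with failure probability $\exp(-D^{\Omega(1)})$. That failure probability is small enough for the Lov\'asz Local Lemma, which handles the fact that $|V(\cH)|$ is not bounded in terms of $D$.
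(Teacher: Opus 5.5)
The paper gives no proof of this statement. It quotes Kahn's list-colouring theorem, applied with every list equal to a single palette of $\lfloor(1+\varepsilon)D\rfloor$ colours, and your pigeonhole derivation of the second sentence is the intended one.

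Your from-scratch nibble fails at the concentration step. The hypothesis bounds codegrees only by $\delta D$, which is linear in $D$. With such codegrees, no fixed $\delta$ gives failure probabilities $\exp(-D^{\Omega(1)})$ for the colour-degree $d_c(x)$, the number of uncoloured edges at $x$ on which $c$ is available.

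Here is a configuration the hypotheses allow, even after your regularisation. Take $s=2$ and let $x$ have about $1/\delta$ neighbours $y_i$, each joined to $x$ by $\delta D$ parallel edges. While $c$ is available at $x$, $d_c(x)$ is $\delta D$ times the number of $i$ for which $c$ is still available at $y_i$. In a round, each $y_i$ loses $c$ with probability of order $\theta$, essentially independently. So a deviation of $\eta\theta D$ in $d_c(x)$ has probability about $\exp(-\Theta(\eta^2\theta/\delta))$. This is a positive constant that does not decrease with $D$.

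This is the true tail of the variable, so no use of Talagrand's or Azuma's inequality can do better. The codegree bound controls the expectations, but not these tails. Each bad event depends on $\mathrm{poly}(D)$ others, and $\delta$ is fixed before $D$. So the Local Lemma cannot be applied once $D$ is large, and your round-by-round maintenance of the invariants is unjustified.

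Your outline is the standard argument when codegrees are at most $D^{1-\gamma}$ for a fixed $\gamma>0$, because then the exponent is a power of $D$. That weaker version would cover the proofs of Theorem~\ref{thm:geh} and Theorem~\ref{thm:gd}. There the codegrees are $O(D/v)$ with $D\le v^{k-t}$, hence $O(D^{1-1/(k-t)})$. But Theorem~\ref{thm:general} assumes only $\beta_{k,t}(G)\le\delta$, so it needs the full $\delta D$ statement.

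Reaching codegree $\delta D$ requires an additional idea. One possibility is to work only with quantities averaged over many colours, which do concentrate exponentially in $D$, and to absorb a small proportion of atypical vertex--colour pairs. This is where the substance of the Pippenger--Spencer and Kahn arguments lies.

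Your regularisation step is also only sketched, since fresh dummy vertices would themselves have deficient degree. It can be repaired without dummies, for instance by placing parallel classes of a transversal design on many copies of each vertex. So that step is not the main issue.
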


Theorem~\ref{thm:kahn} follows from the statement for list colourings in
\cite{Kahn}. See also \cite[Theorem~2.4]{Survey}.

Let $k\ge t\ge 2$ be integers. A $k$-subset $B$ of $V$ is \emph{$t$-separated}
if it satisfies \eqref{eq:sep}, that is, if $|B\cap gB|\le t-1$, for all
$g\in G\setminus\{1\}$.

\begin{lemma}\label{lem:sep-basic}
Let $B$ be a $t$-separated $k$-subset of $V$.
\begin{enumerate}[label=\textup{(\roman*)},leftmargin=*]
\item The set $B$ is free, and every $t$-subset of $B$ is free.
\item Distinct $t$-subsets of $B$ lie in distinct $G$-orbits.
\item If $T$ is a $t$-subset of $B$ and $T\subseteq gB$, for some $g\in G$,
then $g=1$.
\end{enumerate}
\end{lemma}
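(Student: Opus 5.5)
The plan is to derive all three parts from one observation. Since $k\ge t$ and $B$ is $t$-separated, no nonidentity $g$ can satisfy $|B\cap gB|\ge t$. I would prove (iii) first, since (i) and (ii) then follow almost immediately.

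For (iii), let $T\subseteq B$ be a $t$-subset with $T\subseteq gB$. Then $T\subseteq B\cap gB$, so $|B\cap gB|\ge |T|=t$. By \eqref{eq:sep} this is impossible unless $g=1$.

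For (i), suppose first that $g\in G$ satisfies $gB=B$. Then $|B\cap gB|=k\ge t$, which forces $g=1$; so $B$ is free. Next let $T$ be a $t$-subset of $B$ and suppose $gT=T$. Then $T=gT\subseteq gB$, and by (iii) we get $g=1$. Hence $T$ is free.

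For (ii), let $T,T'$ be distinct $t$-subsets of $B$ in the same orbit, say $T'=gT$ for some $g\in G$. Then $T'\subseteq B$ and $T'=gT\subseteq gB$. Applying (iii) to $T'$ gives $g=1$, so $T'=T$, a contradiction.

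No step is a real obstacle here. The only points needing care are the order of the argument, proving (iii) first so that (i) and (ii) reduce to it, and recording that $k\ge t$ is used for the freeness of $B$ itself.
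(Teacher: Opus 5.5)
Your proof is correct and is essentially the paper's argument: in each part, a set of size at least $t$ (namely $B$, $T$, or $gT$) is shown to lie in $B\cap gB$ for some $g\ne 1$, which contradicts $t$-separation. The only difference is organisational. You prove (iii) first and derive the freeness of $T$ in (i), and all of (ii), from it, whereas the paper checks each containment directly.
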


\begin{proof}
Let $g\in G\setminus\{1\}$, and let $T$ be a $t$-subset of $B$. Since $B$
is $t$-separated, $|B\cap gB|\le t-1$.

(i) If $gB=B$, then $|B\cap gB|=k\ge t$. If $gT=T$, then
$T\subseteq B\cap gB$, and so $|B\cap gB|\ge t$. Both conclusions contradict $|B\cap gB|\le t-1$. Hence
$B$ and $T$ are free.

(ii) If $gT\subseteq B$, then $gT\subseteq B\cap gB$, and so
$|B\cap gB|\ge t$. This is impossible. Hence no $t$-subset of $B$ other than
$T$ lies in the orbit of $T$.

(iii) If $T\subseteq gB$, then $T\subseteq B\cap gB$, and so
$|B\cap gB|\ge t$. This is impossible. Hence $T\subseteq hB$ with $h\in G$
only if $h=1$.
\end{proof}

\begin{lemma}\label{lem:sep-count}
Let $k\ge t\ge 2$ be integers. A $k$-subset $B$ of $V$ is $t$-separated if
and only if $N(B)=0$. The number of $k$-subsets of $V$ that fail to be
$t$-separated is at most $\alpha_{k,t}(G)\binom{v}{k}$.
\end{lemma}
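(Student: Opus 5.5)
The first assertion only unpacks the definition of $N(B)$. By definition,
\[
 N(B)=\sum_{g\in G\setminus\{1\}}\binom{|B\cap gB|}{t}
\]
is a sum of nonnegative integers. The term for $g$ is positive exactly when $|B\cap gB|\ge t$. Hence $N(B)=0$ if and only if $|B\cap gB|\le t-1$ for every $g\in G\setminus\{1\}$. This is condition \eqref{eq:sep}, that is, $t$-separation. Nothing beyond the identity $\binom{n}{t}=0\iff n<t$ for integers $0\le n$ is needed.

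For the counting assertion I would apply Markov's inequality to the nonnegative integer random variable $N(\rB)$, where $\rB$ is uniform over the $\binom{v}{k}$ $k$-subsets of $V$. Since $N(\rB)\ne0$ implies $N(\rB)\ge1$, we get
\[
 \Pr[N(\rB)\ne 0]\le\EE[N(\rB)]=\alpha_{k,t}(G)
\]
by \eqref{eq:alpha}. Multiplying by $\binom{v}{k}$ and using the first part, the number of $k$-subsets that fail to be $t$-separated is $\binom{v}{k}\Pr[N(\rB)\ne0]\le\alpha_{k,t}(G)\binom{v}{k}$. Equivalently, one can bypass probability: the number of $B$ with $N(B)\ge1$ is at most $\sum_B N(B)=\binom{v}{k}\,\EE[N(\rB)]$.

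There is no real obstacle; this is the computation already displayed after \eqref{eq:alpha}, now stated together with the translation between $N(B)=0$ and $t$-separation. The only care needed is that the summands are integers, so that $N(B)\ne0$ forces $N(B)\ge1$.
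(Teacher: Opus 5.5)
Your proposal is correct and follows the paper's own proof: the equivalence comes from the summands of $N(B)$ being nonnegative integers that vanish exactly when $|B\cap gB|<t$, and the count follows from Markov's inequality applied to the integer-valued $N(\rB)$. Your direct counting alternative, $\#\{B : N(B)\ge 1\}\le\sum_B N(B)$, is the same argument without the probabilistic language.
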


\begin{proof}
Both statements were shown in Section~\ref{sec:main}. We have $N(B)=0$ if
and only if $B$ satisfies \eqref{eq:sep}, and Markov's inequality gives
$N(B)=0$, for all but at most $\alpha_{k,t}(G)\binom{v}{k}$ of the
$k$-subsets $B$ of $V$.
\end{proof}

Let $X$ be the set of $G$-orbits of free $t$-subsets of $V$. For a $t$-subset
or a $k$-subset $A$ of $V$, write $[A]$ for its orbit under $G$. If $B$ is a
$t$-separated $k$-subset and $g\in G$, then $gB$ is $t$-separated, because
$|gB\cap hgB|=|B\cap g^{-1}hgB|$, for all $h\in G$.

\begin{definition}
Let $k>t\ge 2$. The \emph{orbit hypergraph} $\cH_k$ has vertex set $X$ and one edge for each orbit $[B]$ of $t$-separated $k$-subsets of $V$, and
this edge is the set
\[
 \partial[B]=\{[T] : T\subseteq B,\ |T|=t\}.
\]
\end{definition}

The set $\partial[B]$ is independent of the representative $B$. By
Lemma~\ref{lem:sep-basic}, it is a subset of $X$ of size $\binom{k}{t}$.
Different orbits may give the same set of vertices, and such edges are
counted separately.

\begin{lemma}\label{lem:matching-packing}
Let $k>t\ge 2$, and let $\cM$ be a matching in $\cH_k$. Then the union $\cB$
of the orbits that correspond to the edges of $\cM$ is a free $G$-invariant
$t$-$(v,k,1)$ packing with $m|\cM|$ blocks.
\end{lemma}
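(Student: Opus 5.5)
To prove Lemma~\ref{lem:matching-packing}, I fix a set of representatives $B_1,\dots,B_n$, one $t$-separated $k$-subset for each orbit $[B_i]$ that corresponds to an edge of $\cM$, where $n=|\cM|$. Then $\cB=\bigcup_i [B_i]$. The claim has three parts: $\cB$ is $G$-invariant, $\cB$ has exactly $mn$ blocks all of which are free, and every $t$-subset lies in at most one block.

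First, $\cB$ is a union of $G$-orbits, so it is $G$-invariant. Each $B_i$ is free by Lemma~\ref{lem:sep-basic}(i), so the orbit $[B_i]$ has exactly $m$ members. Every image $gB_i$ is $t$-separated, as noted before the definition of $\cH_k$, so every block of $\cB$ is free. The edges of $\cM$ correspond to distinct orbits; repeated edges are separate edges, so distinct edges of $\cM$ still come from distinct orbits. The orbits $[B_i]$ are therefore pairwise disjoint, and $|\cB|=mn$.

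The main step is the packing property. Suppose a $t$-subset $T$ lies in two blocks $gB_i$ and $hB_j$ with $gB_i\ne hB_j$. I split into two cases.
\begin{enumerate}[label=\textup{(\alph*)},leftmargin=*]
\item If $i\ne j$, then $g^{-1}T\subseteq B_i$ and $h^{-1}T\subseteq B_j$. Hence the vertex $[T]$ lies in both $\partial[B_i]$ and $\partial[B_j]$. This contradicts the disjointness of the edges of $\cM$. The vertex $[T]$ lies in $X$ because $T$ is free, by Lemma~\ref{lem:sep-basic}(i) applied to $B_i$.
\item If $i=j$, then $g^{-1}T\subseteq B_i$ and $g^{-1}T\subseteq g^{-1}hB_i$. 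Lemma~\ref{lem:sep-basic}(iii) then forces $g^{-1}h=1$, so $gB_i=hB_i$. This contradicts the assumption that the two blocks are distinct.
\end{enumerate}

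I expect the only real care to be needed in the counting step. One must note that distinct edges of $\cM$ come from distinct orbits even when they have the same vertex set, which is why the disjointness of $[B_i]$ and $[B_j]$ for $i\ne j$ is immediate. The remaining steps reduce directly to Lemma~\ref{lem:sep-basic}.
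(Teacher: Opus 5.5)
Your proof is correct and follows the paper's argument: the same-orbit case contradicts $t$-separation, and the different-orbit case contradicts disjointness of the edges of $\cM$. You route the first case through Lemma~\ref{lem:sep-basic}(iii) where the paper uses $T\subseteq B_1\cap gB_1$ directly, and your explicit count $|\cB|=m|\cM|$, via freeness and the one-to-one correspondence between edges and orbits, fills in a step the paper leaves implicit.
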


\begin{proof}
The family $\cB$ is $G$-invariant, and its blocks are free by
Lemma~\ref{lem:sep-basic}. Let $B_1$ and $B_2$ be distinct blocks that both
include a $t$-subset $T$. If $B_2=gB_1$, for some $g\in G$, then $g\ne 1$ and
$T\subseteq B_1\cap gB_1$, which contradicts the separation of $B_1$.
Otherwise $\partial[B_1]$ and $\partial[B_2]$ are distinct edges of $\cM$
and both contain $[T]$, which contradicts disjointness.
\end{proof}

For a set $\cF$ of $k$-subsets of $V$, let $P_G(\cF,t)$ be the maximum
number of blocks in a free $G$-invariant $t$-$(v,k,1)$ packing, all of whose
blocks belong to $\cF$. Thus $P_G(\cF,t)=P_G(v,k,t)$ when $\cF$ is the set
of all $k$-subsets of $V$.

\begin{proposition}[Packings with prescribed blocks]\label{prop:restricted}
Let $k>t\ge 2$ be integers, let $\varepsilon>0$, and let $\delta$ be the
number given by Theorem~\ref{thm:kahn} for $s=\binom{k}{t}$ and
$\varepsilon$. Let $\cF$ be a set of $t$-separated $k$-subsets of $V$ such
that $gB\in\cF$, for all $B\in\cF$ and all $g\in G$. Let $D>0$ be a number
with the following two properties. Every $t$-subset of $V$ is included in at
most $D$ members of $\cF$. For all $t$-subsets $T$ and $S$ of $V$ in distinct
orbits, at most $\delta D$ members of $\cF$ include $T$ and an image of $S$.
Then
\[
 P_G(\cF,t)\ \ge\ \frac{|\cF|}{(1+\varepsilon)D}.
\]
\end{proposition}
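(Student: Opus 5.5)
We will apply Theorem~\ref{thm:kahn} to the subhypergraph $\cH_\cF$ of $\cH_k$ that has vertex set $X$ and one edge $\partial[B]$ for each orbit $[B]$ contained in $\cF$. Since $\cF$ is $G$-invariant and consists of $t$-separated sets, it is a union of orbits. Each orbit is free by Lemma~\ref{lem:sep-basic}(i), so it has exactly $m$ members. Hence $\cH_\cF$ has $|\cF|/m$ edges, and each edge has $\binom{k}{t}=s$ vertices. A matching $\cM$ in $\cH_\cF$ is a matching in $\cH_k$. By Lemma~\ref{lem:matching-packing}, it gives a free $G$-invariant packing with $m|\cM|$ blocks, all of which lie in $\cF$. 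It therefore suffices to show that the degrees of $\cH_\cF$ are at most $D/m$ and that its codegrees are at most $\delta D/m$. Theorem~\ref{thm:kahn} then yields a matching with at least $(|\cF|/m)/((1+\varepsilon)D/m)$ edges, and hence at least $|\cF|/((1+\varepsilon)D)$ blocks.

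For the degree bound, fix a vertex $[T]$ with $T$ free. Count the pairs $(B,T')$ such that $B\in\cF$, $T'\in[T]$ and $T'\subseteq B$. By Lemma~\ref{lem:sep-basic}(ii), each $B$ whose orbit is an edge through $[T]$ contains exactly one member of $[T]$. Each edge corresponds to $m$ sets $B$, so the number of pairs is $m$ times the degree of $[T]$. Alternatively, each of the $m$ images $T'$ of $T$ lies in at most $D$ members of $\cF$. This gives at most $mD$ pairs, so the degree of $[T]$ is at most $D$.

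This bound is too weak by a factor of $m$, so the degree step needs a sharper count. Fix $T$ itself. Each edge through $[T]$ is an orbit $[B]$ in which some $gB$ contains $T$. By Lemma~\ref{lem:sep-basic}(iii), applied after translating so that $T\subseteq B$, there is exactly one $g$ with $T\subseteq gB$. So the orbits through $[T]$ correspond bijectively to the members of $\cF$ that contain $T$, and the degree is at most $D$. The same argument applied to two vertices $[T],[S]$ in distinct orbits shows that the codegree equals the number of members $B\in\cF$ with $T\subseteq B$ such that $B$ contains some image of $S$. By hypothesis this is at most $\delta D$.

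So the degree is at most $D$, not $D/m$, and the number of edges is $|\cF|/m$. Theorem~\ref{thm:kahn} then gives a matching of size at least $|\cF|/(m(1+\varepsilon)D)$, which corresponds to $|\cF|/((1+\varepsilon)D)$ blocks. This is exactly the claim, since $m|\cM|$ blocks arise from a matching $\cM$. The main point requiring care is this bijection between edges through $[T]$ and members of $\cF$ containing $T$, which rests on Lemma~\ref{lem:sep-basic}(iii). Repeated edges cause no difficulty, because Theorem~\ref{thm:kahn} allows multiple edges. Finally, if $D$ is smaller than the true maximum degree there is nothing to do, since the hypotheses give the bounds directly.
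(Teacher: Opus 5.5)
Your final argument is correct and is essentially the paper's proof. Via Lemma~\ref{lem:sep-basic}(iii), the edges through $[T]$ correspond bijectively to the members of $\cF$ that include $T$, which gives degree at most $D$ and codegree at most $\delta D$; Theorem~\ref{thm:kahn} then yields $|\cF|/((1+\varepsilon)mD)$ edges, hence $|\cF|/((1+\varepsilon)D)$ blocks by Lemma~\ref{lem:matching-packing}.

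You should clean up the write-up, though. The opening target of degrees at most $D/m$ comes from dropping the factor $m$ when converting edges to blocks in your first paragraph. The claim that your double count is ``too weak by a factor of $m$'' rests on that same slip. That double count via Lemma~\ref{lem:sep-basic}(ii) already gives exactly the bound $D$ you need, so it is a valid alternative to the (iii) argument rather than a failed attempt.
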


\begin{proof}
Let $\cH$ be the hypergraph that consists of the vertices of $\cH_k$ and of
the edges $\partial[B]$ with $B\in\cF$. The members of $\cF$ are free by
Lemma~\ref{lem:sep-basic}, so their orbits have size $m$, and $\cH$ has
$|\cF|/m$ edges.

Let $T$ be a free $t$-subset of $V$. Every edge of $\cH$ that contains $[T]$
has the form $\partial[B]$ with $B\in\cF$ and $T\subseteq B$. By
Lemma~\ref{lem:sep-basic}(iii), the orbit $[B]$ contains exactly one set that
includes $T$. Hence the degree of $[T]$ in $\cH$ is the number of members of
$\cF$ that include $T$, which is at most $D$.

Let $[T]$ and $[S]$ be distinct vertices. Every edge that contains both has
the form $\partial[B]$ with $B\in\cF$ and $T\subseteq B$, and $B$ includes an
image of $S$. Hence the codegree of $[T]$ and $[S]$ is at most $\delta D$.

By Theorem~\ref{thm:kahn}, the hypergraph $\cH$ has a matching $\cM$ with at
least $|\cF|/((1+\varepsilon)mD)$ edges. By
Lemma~\ref{lem:matching-packing}, the union of the orbits that correspond to
the edges of $\cM$ is a free $G$-invariant $t$-$(v,k,1)$ packing with $m|\cM|$ blocks. The blocks of this packing belong to $\cF$.
\end{proof}

Semiregularity controls both $\alpha_{k,t}(G)$ and $\beta_{k,t}(G)$. In a
semiregular group, at most one element maps a given point $x$ to a given
point $y$. Hence, for $t$-subsets $T$ and $S$ of $V$, at most $t^2$ elements
$g\in G$ satisfy $T\cap gS\ne\emptyset$, and for every other element $g$ the
image $gS$ has $t$ points outside $T$. This bounds $\beta_{k,t}(G)$. For
$\alpha_{k,t}(G)$, the pairs $(g,T)$ with $g\ne 1$ and
$T\cap gT\ne\emptyset$ are counted according to the paths and cycles that
$g$ induces on $T$. For $t=2$, an involution $g$ and a $2$-subset $T$ whose
two points are exchanged by $g$ satisfy $gT=T$, and these pairs give the term
$\nu_2(G)/\binom{v}{2}$ in the bound.

\begin{lemma}\label{lem:semireg}
Let $k>t\ge 2$ be integers and let $G$ be semiregular. There is a constant
$c_1=c_1(k,t)\ge 1$ such that
\[
 \alpha_{k,t}(G)\le c_1\Bigl(\frac{1}{v}+\nu_t(G)\Big/\binom{v}{t}\Bigr)
 \qquad\text{and}\qquad
 \beta_{k,t}(G)\le\frac{c_1}{v}.
\]
\end{lemma}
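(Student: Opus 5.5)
We bound $\beta_{k,t}(G)$ first, since it is the easier quantity, and then treat $\alpha_{k,t}(G)$.

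\emph{Bounding $\beta_{k,t}(G)$.} Fix $t$-subsets $T$ and $S$. By linearity,
\[
\EE[M_{T,S}(\rB)\mid T\subseteq\rB]=\sum_{g:\,gS\ne T}\Pr[gS\subseteq\rB\mid T\subseteq\rB].
\]
Split the sum according to $j=|gS\setminus T|$, which lies in $\{1,\dots,t\}$ because $gS\ne T$. The conditional probability is $\binom{v-t-j}{k-t-j}/\binom{v-t}{k-t}\le (k/(v-t))^{j}$. If $j=t$, there are at most $m\le v$ elements $g$, contributing $O(v\cdot v^{-t})=O(v^{1-t})\le O(1/v)$, since $t\ge2$. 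If $j<t$, then $gS\cap T\ne\emptyset$. By semiregularity, $g$ is determined by a pair $(x,y)\in S\times T$ with $gx=y$, so there are at most $t^2$ such $g$, each contributing $O(1/v)$. Summing the two cases gives $\beta_{k,t}(G)\le c_1/v$ for large $v$. For small $v$ the bound follows by enlarging $c_1$, since trivially $\beta\le m\le v$; more precisely $\beta\le$ const for $v$ below a threshold depending on $k,t$.

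\emph{Bounding $\alpha_{k,t}(G)$.} Write
\[
\alpha_{k,t}(G)=\sum_{g\ne1}\sum_{T}\Pr[T\cup g^{-1}T\subseteq\rB],
\]
summing over $t$-subsets $T$ with $T\subseteq gB$ and $T\subseteq B$, which amounts to $T\cup g^{-1}T\subseteq B$. Let $U=T\cup g^{-1}T$ and $u=|U|$. Then $\Pr[U\subseteq\rB]\le (k/v)^{u}$ up to constants, where $t\le u\le 2t$. We count pairs $(g,T)$ with $|U|=u$.

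\begin{itemize}
\item If $u=t$, then $g^{-1}T=T$, so $T$ is not free. The number of such pairs is at most $m'$ times $\nu_t(G)$, where $m'$ is the number of $g$ fixing a given $T$. By semiregularity, $g$ is determined by the image of one point of $T$, so $m'\le t$. This case gives the term $c\,\nu_t(G)/\binom{v}{t}$.
\item If $u>t$, consider the graph on $U$ with edges $x\to gx$ for $x\in g^{-1}T\cap T$. We parametrise $(g,T)$ by choosing $g$ and a minimal set of "free" points: $T$ is determined by $g$ together with one starting point in each $g$-path component inside $U$, plus the points of $T$ not in $g^{-1}T$. A cleaner count is as follows. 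If $T\cap g^{-1}T=\emptyset$, then $u=2t$; the pairs number at most $v\cdot v^{t}$, and the probability is $O(v^{-2t})$, giving $O(v^{1-t})$. Otherwise some $x\in T$ has $gx\in T$, so $g$ is determined by $T$ up to $t^2$ choices. The pairs then number $O(v^{t})$, while $u\ge t+1$ gives probability $O(v^{-t-1})$, for a total of $O(1/v)$.
\end{itemize}

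The second case of the $u>t$ analysis is the step to check carefully: one must confirm that whenever $T\ne g^{-1}T$, the union really has at least $t+1$ points. This holds because $g^{-1}T$ is a $t$-set distinct from $T$. This simple dichotomy therefore suffices, and the path/cycle analysis mentioned in the text is not needed. Combining the cases gives $\alpha_{k,t}(G)\le c_1\bigl(1/v+\nu_t(G)/\binom{v}{t}\bigr)$. For small $v$, where the asymptotic probability estimates need $v\ge 2k$, say, the inequality follows by taking $c_1$ large, since $\alpha\le m\binom{k}{t}\le v\binom{k}{t}$ and $1/v$ is bounded below.
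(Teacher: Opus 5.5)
Your argument is correct. For $\beta_{k,t}(G)$ it is the paper's proof: at most $t^2$ elements $g$ have $gS\cap T\ne\emptyset$, each costing $O(1/v)$, and at most $m\le v$ others remain, each costing $O(v^{-t})$.

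For $\alpha_{k,t}(G)$ your route is genuinely different and shorter. The paper classifies the pairs $(g,T)$ by the isomorphism type of the digraph $\Gamma_g(T)$ with arcs $x\to gx$. For each type and each $g$ it counts at most $v^{t-i+q}$ sets $T$, where $i$ is the number of arcs and $q$ the number of directed cycles, and then sums over $g$. Cycles of $g$ inside $T$ are the only source of extra factors of $v$, and $q\le\lfloor t/2\rfloor\le t-2$ when $t\ge 3$. So the paper gets $\alpha_{k,t}(G)=O(1/v)$ directly for $t\ge 3$, and needs the $\nu_t$ term only for involutions when $t=2$.

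You sum over $T$ first instead. When $T\cap g^{-1}T\ne\emptyset$, the element $g$ is determined by a single pair in $T\times T$, so there are only $O(v^t)$ such pairs $(g,T)$. Each of these with $gT\ne T$ has $|T\cup g^{-1}T|\ge t+1$, which gives $O(1/v)$. The pairs with $gT=T$ are exactly the nonfree $T$, each with at most $t-1$ nonidentity stabilising elements, and you charge them to $\nu_t(G)$. This avoids the path/cycle analysis altogether. The cost is that for $t\ge 3$ you need Lemma~\ref{lem:nu}, whose proof is itself the cycle count, to turn your bound into $O(1/v)$.

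In exchange, your argument uses semiregularity only through ``at most one element maps $x$ to $y$'' and $m\le v$. It therefore carries over, with constants depending on $r$, to any group in which at most $r$ elements map $x$ to $y$, such as the dihedral group of the concluding remarks. There a separate bound on $\nu_t$ is still needed.

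Your small-$v$ patches are harmless but unnecessary. The bound $\binom{v-j}{k-j}/\binom{v}{k}\le(k/v)^j$ and its conditional analogue hold for every $v\ge k$, because each factor $(k-i)/(v-i)$ is at most $k/v$.
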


\begin{proof}
Let $J\subseteq V$ with $|J|=j$. If $j\le k$, then
$\Pr[J\subseteq\rB]=\binom{v-j}{k-j}/\binom{v}{k}$, and otherwise this
probability is zero. In both cases
\begin{equation}\label{eq:prob}
 \Pr[J\subseteq\rB]\ \le\ \Bigl(\frac{k}{v}\Bigr)^{j}.
\end{equation}
If $J$ includes a $t$-subset $T$, then likewise
\begin{equation}\label{eq:condprob}
 \Pr[J\subseteq\rB\mid T\subseteq\rB]\ \le\ \Bigl(\frac{k}{v}\Bigr)^{j-t},
\end{equation}
because this probability is $\binom{v-j}{k-j}/\binom{v-t}{k-t}$ if $j\le k$,
and zero otherwise.

A $t$-subset $T'$ lies in $B\cap gB$ if and only if $T=g^{-1}T'$ satisfies
$T\subseteq B$ and $gT\subseteq B$. Hence
\begin{equation}\label{eq:alpha-sum}
 \alpha_{k,t}(G)=\sum_{g\in G\setminus\{1\}}\ \sum_{T}
 \Pr[T\cup gT\subseteq\rB],
\end{equation}
where $T$ ranges over the $t$-subsets of $V$. Let $T$ be a $t$-subset of $V$
and let $g\in G\setminus\{1\}$. Let $\Gamma_g(T)$ be the directed graph on
$T$ with an arc from $x$ to $gx$ whenever $x\in T$ and $gx\in T$. The number
of arcs is $i=|T\cap gT|$, and $|T\cup gT|=2t-i$. Every vertex has at most
one outgoing arc and at most one incoming arc, and there are no loops. Hence
every component of $\Gamma_g(T)$ with at least one arc is a directed path or
a directed cycle. A directed cycle is a cycle of the permutation $g$, so it
has exactly $\ord(g)$ vertices and $\ord(g)$ arcs. Let $p$ be the number of
components that are paths with at least one arc, and let $q$ be the number of
components that are cycles. A path with $j$ arcs has $j+1$ vertices, and a
cycle with $j$ arcs has $j$ vertices. Therefore the components with at least
one arc have $i+p$ vertices in total.

We bound the contribution to \eqref{eq:alpha-sum} of the pairs $(T,g)$ for
which $\Gamma_g(T)$ has a given isomorphism type. The number of types depends
only on $t$. Consider one type and one element $g$. The type determines the number of vertices of each component. A component
that is a directed path is determined by $g$ and by its initial vertex. A
component that is a directed cycle is determined by $g$ and by any one of
its vertices. Hence there are at most $v^{p+q}$ choices for the vertices of
the components with at least one arc, and
at most $v^{t-i-p}$ choices for the remaining points of $T$. Thus at most
$v^{t-i+q}$ sets $T$ arise. By \eqref{eq:prob}, each contributes at most
$(k/v)^{2t-i}$. The contribution of this type and this $g$ is therefore at
most $k^{2t}v^{q-t}$.

If $q=0$, then summation over the at most $v$ elements $g$ gives at most
$k^{2t}v^{1-t}\le k^{2t}v^{-1}$. Suppose that $q\ge 1$. Each cycle has at
least two arcs, so $q\le\lfloor t/2\rfloor$. If $t\ge 3$, then
$\lfloor t/2\rfloor\le t-2$, and summation over $g$ gives at most
$k^{2t}v^{1+q-t}\le k^{2t}v^{-1}$. If $t=2$, then $q=1$, the two arcs form a
cycle of length two, and $g$ is an involution. Summation over the involutions
gives at most $k^{2t}\iota(G)\,v^{-1}$. Since
$\nu_2(G)=\iota(G)v/2$ by Lemma~\ref{lem:nu}, we have $\iota(G)/v\le\nu_2(G)/\binom{v}{2}$. The bound on $\alpha_{k,t}(G)$
follows, with $c_1$ at least $k^{2t}$ times the number of types.

Let $T$ and $S$ be $t$-subsets of $V$. Then
\[
 \EE\bigl[M_{T,S}(\rB)\,\big|\,T\subseteq\rB\bigr]
 =\sum_{\substack{g\in G\\ gS\ne T}}\Pr[T\cup gS\subseteq\rB\mid T\subseteq\rB].
\]
Let $g\in G$ with $gS\ne T$. Then $|T\cup gS|\ge t+1$. If
$gS\cap T\ne\emptyset$, then $gy=x$, for some $y\in S$ and $x\in T$.
Semiregularity gives at most $t^2$ such elements $g$, and each contributes at
most $k/v$ by \eqref{eq:condprob}. If $gS\cap T=\emptyset$, then
$|T\cup gS|=2t$, and $g$ contributes at most $(k/v)^{t}$. There are at most
$m\le v$ such elements, and $v(k/v)^{t}\le k^{t}/v$. Hence
$\beta_{k,t}(G)\le(t^2k+k^{t})/v$.
\end{proof}

\section{Proofs of the Main Results}\label{sec:proofs}

\begin{restated}{Proposition}{prop:pairs}
Let $k\ge t\ge 1$ be integers and let $G\le\Sym(V)$. Then
\[
 P_G(v,k,t)\ \le\ \Bigl(\binom{v}{t}-\nu_t(G)\Bigr)\Big/\binom{k}{t}.
\]
\end{restated}

\begin{proof}
Let $\cB$ be a free $G$-invariant $t$-$(v,k,1)$ packing. Let $T$ be a
$t$-subset of $V$ that is not free, and let $g\in G\setminus\{1\}$ with
$gT=T$. Suppose that a block $B$ includes $T$. Then $gB$ includes $gT=T$.
Since $B$ is free, $gB\ne B$, and so two distinct blocks include $T$. This is
impossible. Hence no block includes a $t$-subset that is not free. The blocks
of $\cB$ therefore include at most $\binom{v}{t}-\nu_t(G)$ of the
$t$-subsets of $V$, and each $t$-subset lies in at most one block.
\end{proof}

\begin{restated}{Theorem}{thm:general}[$G$-invariant Erd\H{o}s--Hanani theorem]
Let $k>t\ge 2$ be integers and let $\varepsilon>0$. There exists a number
$\delta=\delta(k,t,\varepsilon)>0$ with the following property. Let $V$ be a
set of $v\ge k$ points, and let $G\le\Sym(V)$ be a permutation group with
$\beta_{k,t}(G)\le\delta$. Then
\[
 P_G(v,k,t)\ \ge\ \bigl(1-\varepsilon-\alpha_{k,t}(G)\bigr)
 \binom{v}{t}\Big/\binom{k}{t}.
\]
\end{restated}

\begin{proof}
Put $D=\binom{v-t}{k-t}$. Let $\delta$ be the number given by
Theorem~\ref{thm:kahn} for $s=\binom{k}{t}$ and $\varepsilon$, and suppose
that $\beta_{k,t}(G)\le\delta$. Let $\cF$ be the set of all $t$-separated
$k$-subsets of $V$. As shown before the definition of $\cH_k$, we have
$gB\in\cF$, for all $B\in\cF$ and all $g\in G$. By
Lemma~\ref{lem:sep-count}, $|\cF|\ge(1-\alpha_{k,t}(G))\binom{v}{k}$. Every
$t$-subset of $V$ is included in at most $D$ members of $\cF$. By
\eqref{eq:codeg}, for all $t$-subsets $T$ and $S$ in distinct orbits, at most
$\beta_{k,t}(G)D\le\delta D$ members of $\cF$ include $T$ and an image of
$S$. Since $\binom{v}{k}/D=\binom{v}{t}/\binom{k}{t}$,
Proposition~\ref{prop:restricted} gives
\[
 P_G(v,k,t)\ \ge\ P_G(\cF,t)\ \ge\ \frac{1-\alpha_{k,t}(G)}{1+\varepsilon}
 \binom{v}{t}\Big/\binom{k}{t}.
\]
The theorem follows, because $(1-x)/(1+\varepsilon)\ge 1-\varepsilon-x$ for
all $x\ge 0$.
\end{proof}

\begin{restated}{Theorem}{thm:geh}[Semiregular groups]
Let $k>t\ge 2$ be integers and let $\varepsilon>0$. There exist an integer
$v_0=v_0(k,t,\varepsilon)$ and a number $c=c(k,t)>0$ with the following
property. Let $G$ be a semiregular permutation group on a set of $v\ge v_0$
points. Then
\[
 P_G(v,k,t)\ \ge\ \Bigl((1-\varepsilon)\binom{v}{t}-c\,\nu_t(G)\Bigr)
 \Big/\binom{k}{t}.
\]
\end{restated}

\begin{proof}
Let $\delta=\delta(k,t,\varepsilon/2)$ be the number given by
Theorem~\ref{thm:general}, and let $c_1=c_1(k,t)$ be the constant of
Lemma~\ref{lem:semireg}. By Lemma~\ref{lem:semireg}, $\beta_{k,t}(G)\le c_1/v$ and
$\alpha_{k,t}(G)\le c_1/v+c_1\nu_t(G)/\binom{v}{t}$. Let
$v\ge\max\{k,c_1/\delta,2c_1/\varepsilon\}$. Then
$\beta_{k,t}(G)\le\delta$ and $c_1/v\le\varepsilon/2$, and
Theorem~\ref{thm:general} gives
\[
 P_G(v,k,t)\ \ge\ \Bigl(1-\varepsilon
 -c_1\,\nu_t(G)\Big/\binom{v}{t}\Bigr)\binom{v}{t}\Big/\binom{k}{t}.
\]
This is the assertion with $c=c_1$.
\end{proof}

\section{Applications}\label{sec:applications}

For OOCs, multidimensional OOCs and optical orthogonal signature pattern
codes, the maximum size of a code is expressed by $P_G(v,k,t)$, for a
semiregular group $G$ with at most three involutions. For quasi-cyclic
constant-weight codes, $P_G(v,k,t)$ is the maximum size of a code in which
every codeword has $|G|$ distinct images under $G$. For OOCs with at most
one pulse per wavelength, the maximum size is expressed by $P_G(\cF,t)$,
where $\cF$ is a set of transversals. For each
code family, Table~\ref{tab:summary} lists the group $G$, the set $V$, and
the parameters for which the Johnson bound is asymptotically attained, in
earlier work and in this paper. For semiregular groups with at most three
involutions, Theorem~\ref{thm:geh} gives the asymptotic value of
$P_G(v,k,t)$.

\begin{corollary}\label{cor:few}
Let $k>t\ge 2$ be integers and let $\varepsilon>0$. There exists an integer
$v_1=v_1(k,t,\varepsilon)$ such that
\[
 (1-\varepsilon)\binom{v}{t}\Big/\binom{k}{t}\ \le\ P_G(v,k,t)\ \le\
 \binom{v}{t}\Big/\binom{k}{t},
\]
for every semiregular permutation group $G$ with at most three involutions on
a set of $v\ge v_1$ points.
\end{corollary}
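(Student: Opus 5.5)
The upper bound follows at once from Proposition~\ref{prop:pairs}, since $\nu_t(G)\ge 0$. For the lower bound we apply Theorem~\ref{thm:geh} with $\varepsilon/2$ in place of $\varepsilon$. This gives $v_0=v_0(k,t,\varepsilon/2)$ and $c=c(k,t)$ such that
\[
 P_G(v,k,t)\ \ge\ \Bigl((1-\varepsilon/2)\binom{v}{t}-c\,\nu_t(G)\Bigr)\Big/\binom{k}{t}
\]
for every semiregular $G$ of degree $v\ge v_0$. It then suffices to show that $c\,\nu_t(G)\le(\varepsilon/2)\binom{v}{t}$ for all large $v$, uniformly over semiregular groups with at most three involutions.

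We bound $\nu_t(G)$ with Lemma~\ref{lem:nu}, treating $t=2$ and $t\ge 3$ separately.

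For $t=2$, Lemma~\ref{lem:nu} gives $\nu_2(G)=\iota(G)v/2\le 3v/2$. The required inequality becomes $3cv/2\le(\varepsilon/2)v(v-1)/2$, which holds once $v\ge 1+6c/\varepsilon$.

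For $t\ge 3$, Lemma~\ref{lem:nu} gives $\nu_t(G)\le v^{t-1}$, whatever the number of involutions. Since $\binom{v}{t}\ge (v-t)^t/t!$, we have $v^{t-1}/\binom{v}{t}\to 0$, and it is at most $\varepsilon/(2c)$ for all $v$ beyond some threshold that depends only on $k$, $t$ and $\varepsilon$.

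Finally we take $v_1$ to be the maximum of $v_0(k,t,\varepsilon/2)$ and the two thresholds above. All of these depend only on $k$, $t$ and $\varepsilon$, because $c$ depends only on $k$ and $t$, so $v_1=v_1(k,t,\varepsilon)$ as required. There is no real obstacle. The only point to check is this uniformity: none of the thresholds may depend on $G$, and this holds because the involution count is bounded by $3$ and the bound in Lemma~\ref{lem:nu} for $t\ge 3$ is independent of $G$.
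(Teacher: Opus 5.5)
Your proof is correct and follows the paper's argument: you apply Theorem~\ref{thm:geh} with $\varepsilon/2$ and absorb the term $c\,\nu_t(G)$ using Lemma~\ref{lem:nu}, with thresholds depending only on $k$, $t$ and $\varepsilon$. The differences are cosmetic. The paper merges your two cases into the single bound $\nu_t(G)\le 2v^{t-1}$ together with $\binom{v}{t}\ge(v/t)^t$, and it cites \eqref{eq:trivial} rather than Proposition~\ref{prop:pairs} for the upper bound.
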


\begin{proof}
The upper bound is \eqref{eq:trivial}. By Lemma~\ref{lem:nu},
$\nu_2(G)=\iota(G)v/2\le 3v/2$, and $\nu_t(G)\le v^{t-1}$, for $t\ge 3$. Hence
$\nu_t(G)\le 2v^{t-1}$. Let $c$ and $v_0$ be the numbers given by
Theorem~\ref{thm:geh} for $\varepsilon/2$. For $v\ge v_0$, Theorem~\ref{thm:geh} gives
$P_G(v,k,t)\ge\bigl((1-\varepsilon/2)\binom{v}{t}-2cv^{t-1}\bigr)/\binom{k}{t}$.
Since $\binom{v}{t}\ge(v/t)^t$, we have
$2cv^{t-1}\le(\varepsilon/2)\binom{v}{t}$, for all
$v\ge 4ct^{t}/\varepsilon$.
\end{proof}

\begin{table}[ht]
\centering
\footnotesize
\renewcommand{\arraystretch}{1.25}
\begin{tabular}{@{}>{\raggedright\arraybackslash}p{4.2cm}>{\raggedright\arraybackslash}p{3.0cm}>{\raggedright\arraybackslash}p{3.6cm}>{\raggedright\arraybackslash}p{3.5cm}@{}}
\toprule
& & \multicolumn{2}{c@{}}{Johnson bound asymptotically attained for}\\
\cmidrule{3-4}
Code family & Group $G$ and set $V$ & Earlier work & This paper\\
\midrule
$(v,w,\lambda)$ OOCs & $\ZZ_v$ on $\ZZ_v$ &
lengths in particular congruence classes or of particular algebraic form \cite{AldersonMellinger,FujiHaraMiao,HuczynskaNg,Moreno,Omrani,Yin,ZhaoEtAl} &
all $v$\\\addlinespace[6pt]
$d$-dimensional $(\Lambda_1\times\dots\times\Lambda_{d-1}\times T,w,\lambda)$ OOCs, where $\Lambda=\Lambda_1\cdots\Lambda_{d-1}$ & $\ZZ_T$ on a set of size $\Lambda T$ &
particular $\Lambda$ and $T$ \cite{Alderson,Omrani2D} &
all $\Lambda$ and $T$\\\addlinespace[6pt]
Two-dimensional $(\Lambda\times T,w,\lambda)$ OOCs with at most one pulse per wavelength & $\ZZ_T$ on $\ZZ_\Lambda\times\ZZ_T$ &
$T$ prime, $T\ge w$ and $\Lambda\to\infty$ \cite{Omrani2D}\newline $w=3$, $\lambda=1$, all $T$ and $\Lambda\to\infty$ \cite{GallantJiangLing,Jiang,WangYin} &
all $\Lambda\ge w$ and all $T$\\\addlinespace[6pt]
$(m,n,w,\lambda)$ optical orthogonal signature pattern codes & $\ZZ_m\times\ZZ_n$ on itself &
particular $m$, $n$ and $w$ \cite{JiDingWangGe} &
all $m$ and $n$\\\addlinespace[6pt]
Binary quasi-cyclic codes of length $n$, index $\ell$, constant weight $w$ and minimum distance at least $2\delta$ & subgroup of order $n/\ell$ of $\ZZ_n$ on $\ZZ_n$ &
$\ell=n$ \cite{Rodl}\newline $\ell=1$ and the lengths $n$ in the first row &
all $n$ and all divisors $\ell$ of $n$\\
\bottomrule
\end{tabular}
\caption{Asymptotic attainment of the Johnson bound for constant weights and
constant $\lambda\ge 1$, with all weights at least $\lambda+2$. In the last
row, $\lambda=w-\delta$, and the codes are those in which every codeword has
$n/\ell$ distinct images under $G$.}
\label{tab:summary}
\end{table}

\subsection{Optical Orthogonal Codes}\label{sec:ooc}

Let $v>w>\lambda\ge 1$ be integers. As in Section~\ref{sec:introduction}, we identify a
codeword $x$ of an OOC with its support $C$. Let $C'$ be the support of a
codeword $y$. The sum in \eqref{eq:auto} equals $|C\cap(C-a)|$, and the sum
in \eqref{eq:cross} equals $|C\cap(C'-a)|$. Thus a
$(v,w,\lambda)$ OOC is a family $\cC$ of $w$-subsets of $\ZZ_v$ such
that
\[
 |C\cap(C+a)|\le\lambda,\quad\text{for all } C\in\cC \text{ and }
 a\in\ZZ_v\setminus\{0\},
\]
and
\[
 |C\cap(C'+a)|\le\lambda,\quad\text{for all distinct } C,C'\in\cC \text{ and }
 a\in\ZZ_v .
\]
These are the autocorrelation property and the cross-correlation property,
stated for supports.
Let $G=\ZZ_v$ act on $V=\ZZ_v$ by translation. This action is regular. The
following lemma is known \cite{FujiHaraMiao,Yin}.

\begin{lemma}\label{lem:ooc-packing}
Let $\cC$ be a family of $w$-subsets of $\ZZ_v$, and put
$\cB=\{C+a : C\in\cC,\ a\in\ZZ_v\}$. Then $\cC$ is a $(v,w,\lambda)$ OOC if
and only if $\cB$ is a free $G$-invariant $(\lambda+1)$-$(v,w,1)$ packing
with $v|\cC|$ blocks. Moreover, every free $G$-invariant
$(\lambda+1)$-$(v,w,1)$ packing on $\ZZ_v$ has this form, where $\cC$ is an
arbitrary set of representatives of the orbits of $G$ on its blocks.
\end{lemma}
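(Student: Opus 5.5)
The plan is to prove the two directions separately, translating the correlation conditions into statements about pairwise intersections of translates.

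For the forward direction, let $\cC$ be a $(v,w,\lambda)$ OOC. First I show that the sets $C+a$ with $C\in\cC$ and $a\in\ZZ_v$ are pairwise distinct.
\begin{itemize}
\item If $C+a=C+b$ with $a\ne b$, then $|C\cap(C+(b-a))|=w>\lambda$, which violates the autocorrelation property.
\item If $C+a=C'+b$ with $C\ne C'$, then $|C\cap(C'+(b-a))|=w>\lambda$, which violates the cross-correlation property.
\end{itemize}
Hence $|\cB|=v|\cC|$, and every block has $v$ distinct translates, so every block is free. The family $\cB$ is $\ZZ_v$-invariant by construction.

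For the packing property, take distinct blocks $C+a$ and $C'+b$. Translating by $-a$ gives
\[
|(C+a)\cap(C'+b)|=|C\cap(C'+(b-a))|.
\]
If $C=C'$, then $b\ne a$ and this is at most $\lambda$ by autocorrelation. If $C\ne C'$, it is at most $\lambda$ by cross-correlation. So two distinct blocks share at most $\lambda$ points. Therefore no $(\lambda+1)$-subset lies in two blocks, and $\cB$ is a $(\lambda+1)$-$(v,w,1)$ packing.

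For the converse, suppose $\cB$ is a free invariant packing with $v|\cC|$ blocks. Then the $v|\cC|$ formal pairs $(C,a)$ give pairwise distinct sets. In particular:
\begin{itemize}
\item $C+a\ne C$ for $a\ne0$, so $C$ and $C+a$ are distinct blocks. They share at most $\lambda$ points, because two blocks meeting in $\lambda+1$ points would both contain a common $(\lambda+1)$-subset. This is autocorrelation.
\item For $C\ne C'$ and any $a$, the sets $C$ and $C'+a$ are distinct blocks, so they also share at most $\lambda$ points. This is cross-correlation.
\end{itemize}
The same argument works if we only assume that $\cB$ is a packing and that the sets $C+a$ are pairwise distinct, which is what the count $v|\cC|$ encodes.

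For the final claim, let $\cB$ be any free $G$-invariant $(\lambda+1)$-$(v,w,1)$ packing on $\ZZ_v$, and let $\cC$ be a set of orbit representatives. The packing is $G$-invariant, so it is the union of the orbits $\{C+a:a\in\ZZ_v\}$ for $C\in\cC$. Each orbit has size $v$ because the blocks are free, and the orbits are disjoint. So $\cB$ has exactly the stated form with $|\cB|=v|\cC|$, and the first part then shows that $\cC$ is an OOC.

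No step is a real obstacle. The only point needing care is the distinctness bookkeeping in the converse. The hypothesis ``with $v|\cC|$ blocks'' is exactly what prevents coincidences $C+a=C'+b$, and without it the two correlation properties would not follow.
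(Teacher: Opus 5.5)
Your proof is correct and follows the paper's argument essentially step for step: you rewrite $|(C+a)\cap(C'+b)|$ as $|C\cap(C'+(b-a))|$, use $\lambda<w$ to get pairwise distinct translates and hence $v|\cC|$ free blocks, recover the two correlation properties from the packing condition in the converse, and use the orbit size $v$ for the final claim. The only cosmetic difference is that you prove distinctness before the intersection bound, whereas the paper deduces distinctness from the bound $|(C+a)\cap(C'+b)|\le\lambda<w$.
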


\begin{proof}
The family $\cB$ is $G$-invariant. Suppose that $\cC$ is a $(v,w,\lambda)$
OOC. Let $C,C'\in\cC$ and $a,b\in\ZZ_v$ with $(C,a)\ne(C',b)$. Then
$|(C+a)\cap(C'+b)|=|C\cap(C'+b-a)|\le\lambda$. This follows from the autocorrelation property if $C=C'$, because then
$b-a\ne 0$, and from the cross-correlation property if $C\ne C'$. Since $\lambda<w$, we have $C+a\ne C'+b$. Hence $\cB$ has
$v|\cC|$ blocks, every block is free, and every two distinct blocks have at
most $\lambda$ common points. Thus $\cB$ is a free $G$-invariant
$(\lambda+1)$-$(v,w,1)$ packing.

Suppose now that $\cB$ is a free $G$-invariant $(\lambda+1)$-$(v,w,1)$
packing with $v|\cC|$ blocks. Then the sets $C+a$ with $C\in\cC$ and
$a\in\ZZ_v$ are pairwise distinct. Two distinct blocks of a
$(\lambda+1)$-$(v,w,1)$ packing have at most $\lambda$ common points. Hence
$|C\cap(C+a)|\le\lambda$, for all $C\in\cC$ and $a\ne 0$, and
$|C\cap(C'+a)|\le\lambda$, for all distinct $C,C'\in\cC$ and all $a$. Thus
$\cC$ is a $(v,w,\lambda)$ OOC.

Finally, let $\cB'$ be a free $G$-invariant $(\lambda+1)$-$(v,w,1)$ packing
on $\ZZ_v$, and let $\cC$ be a set of representatives of the orbits of $G$ on
$\cB'$. Every orbit has $v$ blocks. Hence
$\cB'=\{C+a : C\in\cC,\ a\in\ZZ_v\}$, and $\cB'$ has $v|\cC|$ blocks.
\end{proof}

\begin{corollary}\label{cor:ooc}
Let $G=\ZZ_v$ act on $\ZZ_v$ by translation. Then
\[
 \Phi(v,w,\lambda)=\frac{1}{v}\,P_G(v,w,\lambda+1).
\]
Consequently, for all integers $\lambda\ge 1$ and $w\ge\lambda+2$,
\[
 \Phi(v,w,\lambda)=(1-o(1))\,
 \frac{(v-1)(v-2)\cdots(v-\lambda)}{w(w-1)\cdots(w-\lambda)}
 \qquad (v\to\infty).
\]
\end{corollary}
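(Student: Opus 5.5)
The identity $\Phi(v,w,\lambda)=P_G(v,w,\lambda+1)/v$ will follow from Lemma~\ref{lem:ooc-packing}, and the asymptotic statement from Corollary~\ref{cor:few} combined with \eqref{eq:ooc-upper}.

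For the identity, let $\cC$ be a $(v,w,\lambda)$ OOC. By Lemma~\ref{lem:ooc-packing}, the family $\{C+a : C\in\cC,\ a\in\ZZ_v\}$ is a free $G$-invariant $(\lambda+1)$-$(v,w,1)$ packing with $v|\cC|$ blocks. Hence $v\Phi(v,w,\lambda)\le P_G(v,w,\lambda+1)$. Conversely, take a free $G$-invariant packing with $P_G(v,w,\lambda+1)$ blocks and a set $\cC$ of orbit representatives. The last part of Lemma~\ref{lem:ooc-packing} shows that the packing equals $\{C+a\}$ and has $v|\cC|$ blocks. The second part then shows that $\cC$ is an OOC. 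This gives the reverse inequality. Since every orbit has exactly $v$ blocks, $P_G(v,w,\lambda+1)$ is divisible by $v$, so no rounding issue arises.

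For the asymptotics, put $k=w$ and $t=\lambda+1$. The hypotheses $\lambda\ge 1$ and $w\ge\lambda+2$ give $k>t\ge 2$, as Corollary~\ref{cor:few} requires. The translation action of $\ZZ_v$ is regular, hence semiregular. A cyclic group has at most one involution, namely $v/2$ when $v$ is even, so $\iota(G)\le 1\le 3$. Corollary~\ref{cor:few} therefore gives, for each $\varepsilon>0$ and all $v\ge v_1(w,\lambda+1,\varepsilon)$,
\[
(1-\varepsilon)\binom{v}{\lambda+1}\Big/\binom{w}{\lambda+1}\le P_G(v,w,\lambda+1)\le\binom{v}{\lambda+1}\Big/\binom{w}{\lambda+1}.
\]
Divide by $v$ and use the identity from the first step. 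The elementary computation $\frac1v\binom{v}{\lambda+1}/\binom{w}{\lambda+1}=\frac{(v-1)\cdots(v-\lambda)}{w(w-1)\cdots(w-\lambda)}$ from \eqref{eq:ooc-upper} then shows that $\Phi(v,w,\lambda)$ lies between $(1-\varepsilon)$ times the Johnson quantity and the Johnson quantity itself. Since $\varepsilon$ is arbitrary, $\Phi(v,w,\lambda)=(1-o(1))$ times that quantity.

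There is no real obstacle, since the heavy lifting is done by Theorem~\ref{thm:geh} through Corollary~\ref{cor:few}. The only points needing care are the bookkeeping in the identity, namely that the maximisers correspond exactly via orbit representatives, and the check that the hypothesis $w\ge\lambda+2$ is exactly what makes $k>t$.
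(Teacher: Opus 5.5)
Your proposal is correct and follows essentially the same route as the paper: you obtain the identity from Lemma~\ref{lem:ooc-packing} and the asymptotic formula from Corollary~\ref{cor:few} with $k=w$, $t=\lambda+1$, using that a cyclic group has at most one involution and that $\frac{1}{v}\binom{v}{\lambda+1}/\binom{w}{\lambda+1}$ is the right-hand side of \eqref{eq:ooc-upper}. Your explicit two-inequality bookkeeping for the identity and your check that $w\ge\lambda+2$ gives $k>t\ge 2$ simply spell out what the paper's shorter proof leaves implicit.
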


\begin{proof}
The identity $\Phi(v,w,\lambda)=P_G(v,w,\lambda+1)/v$ is
Lemma~\ref{lem:ooc-packing}. The group $G$ is cyclic, so it has at most one
involution. The asymptotic formula follows from
Corollary~\ref{cor:few} with $t=\lambda+1$ and $k=w$, because
$\binom{v}{\lambda+1}/(v\binom{w}{\lambda+1})$ is the right-hand side of
\eqref{eq:ooc-upper}.
\end{proof}

Let $d\ge 2$ and let
$\Lambda_1,\dots,\Lambda_{d-1},T$ be positive integers.
Put $\Lambda=\Lambda_1\cdots\Lambda_{d-1}$ and
\[
 V=\ZZ_{\Lambda_1}\times\dots\times\ZZ_{\Lambda_{d-1}}\times\ZZ_T .
\]
The group $\ZZ_T$ acts on $V$ by translation of the last coordinate. For
$C\subseteq V$ and $a\in\ZZ_T$, write $C+a$ for the image of $C$. A
\emph{$(\Lambda_1\times\dots\times\Lambda_{d-1}\times T,w,\lambda)$ OOC} is a family $\cC$ of $w$-subsets of $V$ that has the autocorrelation
property and the cross-correlation property, stated for supports, with $a$
ranging over $\ZZ_T$.
In the applications, the coordinates for $d=2$ represent wavelength and time,
and for $d=3$ there is an additional coordinate for space or polarisation. For $d=2$, Omrani, Garg, Kumar, Elia and
Bhambhani \cite{Omrani2D} proved the Johnson bound
\begin{equation}\label{eq:johnson-nd}
 |\cC|\le\left\lfloor\frac{\Lambda}{w}\left\lfloor\frac{\Lambda T-1}{w-1}
 \left\lfloor\frac{\Lambda T-2}{w-2}\cdots\left\lfloor
 \frac{\Lambda T-\lambda}{w-\lambda}
 \right\rfloor\cdots\right\rfloor\right\rfloor\right\rfloor
\end{equation}
and constructed optimal and asymptotically optimal families for particular
$\Lambda$ and $T$ from polynomials and rational functions over finite fields.
Alderson \cite{Alderson} extended \eqref{eq:johnson-nd} to all $d$ and gave
further families from finite projective spaces.

The action of $\ZZ_T$ on $V$ is semiregular with $m=T$ and $v=\Lambda T$. This action is regular only when $\Lambda=1$.

\begin{corollary}\label{cor:nd}
Let $G=\ZZ_T$ act on $V$ by translation of the last coordinate. The largest
size of a $(\Lambda_1\times\dots\times\Lambda_{d-1}\times T,w,\lambda)$ OOC
is
\[
 \frac{1}{T}\,P_G(\Lambda T,w,\lambda+1).
\]
Consequently, for all integers $\lambda\ge 1$ and $w\ge\lambda+2$, this
largest size is
\[
 (1-o(1))\,\frac{\Lambda\,(\Lambda T-1)(\Lambda T-2)\cdots(\Lambda T-\lambda)}
 {w(w-1)\cdots(w-\lambda)}
 \qquad(\Lambda T\to\infty).
\]
The estimate is uniform in $d$ and in $\Lambda_1,\dots,\Lambda_{d-1},T$.
\end{corollary}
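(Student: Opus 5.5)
The argument runs parallel to the proof of Corollary~\ref{cor:ooc}, with $\ZZ_T$ acting on $V$ in place of $\ZZ_v$ acting on itself. First I would establish the analogue of Lemma~\ref{lem:ooc-packing}. Let $\cC$ be a family of $w$-subsets of $V$ and put $\cB=\{C+a : C\in\cC,\ a\in\ZZ_T\}$. Then $\cC$ is a $(\Lambda_1\times\dots\times\Lambda_{d-1}\times T,w,\lambda)$ OOC if and only if $\cB$ is a free $G$-invariant $(\lambda+1)$-$(\Lambda T,w,1)$ packing with $T|\cC|$ blocks. The proof is verbatim the one given for Lemma~\ref{lem:ooc-packing}. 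For $(C,a)\ne(C',b)$ we have $|(C+a)\cap(C'+b)|=|C\cap(C'+b-a)|$. By autocorrelation when $C=C'$, and by cross-correlation otherwise, this is at most $\lambda<w$. So the translates are pairwise distinct, every block is free, and any two blocks share at most $\lambda$ points. The converse goes the same way. Every free $G$-invariant packing is a union of orbits of size $T$, so choosing orbit representatives as $\cC$ recovers every such packing. This gives the identity that the maximum size equals $P_G(\Lambda T,w,\lambda+1)/T$.

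For the asymptotics, I would note that $G=\ZZ_T$ acts semiregularly on $V$, because a nonzero translation of the last coordinate fixes no point. Also, being cyclic, $G$ has at most one involution. So Corollary~\ref{cor:few} applies with $v=\Lambda T$, $k=w$ and $t=\lambda+1$. The hypothesis $k>t$ is exactly $w\ge\lambda+2$, and $t\ge2$ holds since $\lambda\ge1$. It gives
\[
 (1-\varepsilon)\binom{\Lambda T}{\lambda+1}\Big/\binom{w}{\lambda+1}\le P_G(\Lambda T,w,\lambda+1)\le\binom{\Lambda T}{\lambda+1}\Big/\binom{w}{\lambda+1}
\]
for all $\Lambda T\ge v_1(w,\lambda+1,\varepsilon)$. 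Dividing by $T$ and expanding the binomials gives
\[
 \frac{1}{T}\binom{\Lambda T}{\lambda+1}\Big/\binom{w}{\lambda+1}=\frac{\Lambda(\Lambda T-1)\cdots(\Lambda T-\lambda)}{w(w-1)\cdots(w-\lambda)},
\]
which is the claimed main term.

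The uniformity claim is the only point needing care, and it is immediate. The threshold $v_1$ in Corollary~\ref{cor:few} depends only on $k$, $t$ and $\varepsilon$, not on the particular semiregular group or the set. Hence the estimate depends on $d,\Lambda_1,\dots,\Lambda_{d-1},T$ only through the product $v=\Lambda T$. No step is a real obstacle. The only things to check are that the action is semiregular for every choice of the factors (including $T=1$, where $G$ is trivial) and that the correspondence lemma uses only translations in $\ZZ_T$.
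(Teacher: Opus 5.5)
Your proposal is correct and follows the paper's proof. You transfer the argument of Lemma~\ref{lem:ooc-packing} to $\ZZ_T$ acting on $V$ to obtain the identity, then apply Corollary~\ref{cor:few} with $v=\Lambda T$, $k=w$, $t=\lambda+1$, using that $\ZZ_T$ acts semiregularly and is cyclic; your explicit computation of the main term and your remark that $v_1$ depends only on $w$, $\lambda$ and $\varepsilon$ spell out the uniformity claim slightly more carefully than the paper does.
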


\begin{proof}
The proof of Lemma~\ref{lem:ooc-packing} applies with $\ZZ_T$ in place of
$\ZZ_v$ and $V$ in place of $\ZZ_v$. This proof shows that a family $\cC$ of $w$-subsets of $V$ is a $(\Lambda_1\times\dots\times\Lambda_{d-1}\times T,w,\lambda)$ OOC if and only
if the sets $C+a$, with $C\in\cC$ and $a\in\ZZ_T$, form a free $G$-invariant
$(\lambda+1)$-$(\Lambda T,w,1)$ packing with $T|\cC|$ blocks. It also shows
that every free $G$-invariant $(\lambda+1)$-$(\Lambda T,w,1)$ packing
consists of the sets $C+a$, with $C\in\cC$ and $a\in\ZZ_T$, for a family
$\cC$ of $w$-subsets of $V$. Hence the largest size of a code is
$P_G(\Lambda T,w,\lambda+1)/T$. The group $G$ is cyclic, and the asymptotic
formula follows from Corollary~\ref{cor:few} with $v=\Lambda T$, $t=\lambda+1$ and
$k=w$.
\end{proof}

Thus \eqref{eq:johnson-nd} is asymptotically tight for every $d$.
Let
$m$ and $n$ be positive integers. An \emph{$(m,n,w,\lambda)$ optical
orthogonal signature pattern code} is a family $\cC$ of $w$-subsets of
$\ZZ_m\times\ZZ_n$ that has the autocorrelation property and the
cross-correlation property, stated for supports, with $a$ ranging over
$\ZZ_m\times\ZZ_n$. Optical orthogonal signature pattern codes were
introduced by Kitayama \cite{Kitayama}
for parallel transmission of images. Let $\Psi(m,n,w,\lambda)$ be the
largest size of an $(m,n,w,\lambda)$ optical orthogonal signature pattern
code. Ji, Ding, Wang and Ge
\cite{JiDingWangGe} constructed asymptotically optimal families for
particular values of $m$, $n$ and $w$.

\begin{corollary}\label{cor:oospc}
Let $G=\ZZ_m\times\ZZ_n$ act on itself by translation. Then
\[
 \Psi(m,n,w,\lambda)=\frac{1}{mn}\,P_G(mn,w,\lambda+1).
\]
Consequently, for all integers $\lambda\ge 1$ and $w\ge\lambda+2$,
\[
 \Psi(m,n,w,\lambda)=(1-o(1))\,
 \frac{(mn-1)(mn-2)\cdots(mn-\lambda)}{w(w-1)\cdots(w-\lambda)}
 \qquad(mn\to\infty).
\]
\end{corollary}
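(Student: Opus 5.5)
The plan is to follow the proof of Corollary~\ref{cor:ooc}, with $G=\ZZ_m\times\ZZ_n$ acting on $V=\ZZ_m\times\ZZ_n$ by translation in place of $\ZZ_v$.

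\emph{Step 1: the identity.} The proof of Lemma~\ref{lem:ooc-packing} uses only two facts: $V$ is an abelian group acting regularly on itself by translation, and the correlation sums equal $|C\cap(C'+a)|$. Both hold here. So the same argument, with $a$ ranging over $\ZZ_m\times\ZZ_n$, shows the following. A family $\cC$ of $w$-subsets is an $(m,n,w,\lambda)$ optical orthogonal signature pattern code if and only if the translates $C+a$ form a free $G$-invariant $(\lambda+1)$-$(mn,w,1)$ packing with $mn|\cC|$ blocks. Conversely, every free $G$-invariant packing is obtained this way from a set of orbit representatives, since each orbit has size $|G|=mn$. This gives $\Psi(m,n,w,\lambda)=P_G(mn,w,\lambda+1)/(mn)$.

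\emph{Step 2: the group hypotheses.} A regular action is semiregular. The only point that differs from the cyclic case is counting involutions, and this is the main (small) obstacle. An involution of $\ZZ_m\times\ZZ_n$ is a nonzero element of $2$-torsion. The $2$-torsion subgroup is $\ZZ_{\gcd(2,m)}\times\ZZ_{\gcd(2,n)}$, which has order at most $4$. Hence $\iota(G)\le 3$, and $G$ satisfies the hypothesis of Corollary~\ref{cor:few}.

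\emph{Step 3: the asymptotics.} Apply Corollary~\ref{cor:few} with $v=mn$, $t=\lambda+1\ge 2$ and $k=w>t$. The hypothesis $w\ge\lambda+2$ is exactly $k>t$. For every $\varepsilon>0$ and all $mn\ge v_1$ this gives
\[
 (1-\varepsilon)\binom{mn}{\lambda+1}\Big/\binom{w}{\lambda+1}\le P_G(mn,w,\lambda+1)\le\binom{mn}{\lambda+1}\Big/\binom{w}{\lambda+1}.
\]
Dividing by $mn$ and using
\[
 \binom{mn}{\lambda+1}\Big/\Bigl(mn\binom{w}{\lambda+1}\Bigr)=\frac{(mn-1)\cdots(mn-\lambda)}{w(w-1)\cdots(w-\lambda)}
\]
yields the stated formula. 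The bound $v_1$ depends only on $w$, $\lambda$ and $\varepsilon$, so the estimate is uniform in the pair $(m,n)$ and holds as $mn\to\infty$.
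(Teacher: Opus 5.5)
Your proposal is correct and matches the paper's proof: you adapt Lemma~\ref{lem:ooc-packing} to $\ZZ_m\times\ZZ_n$ for the identity, note that the regular action has at most three involutions, and apply Corollary~\ref{cor:few} with $v=mn$, $t=\lambda+1$ and $k=w$. Your count of the $2$-torsion subgroup as $\ZZ_{\gcd(2,m)}\times\ZZ_{\gcd(2,n)}$ spells out the involution bound, which the paper asserts without detail.
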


\begin{proof}
The proof of Lemma~\ref{lem:ooc-packing} applies with $\ZZ_m\times\ZZ_n$ in
place of $\ZZ_v$, and gives the identity
$\Psi(m,n,w,\lambda)=P_G(mn,w,\lambda+1)/(mn)$. The group $G$ acts regularly
and has at most three involutions. The asymptotic formula follows from
Corollary~\ref{cor:few} with $v=mn$, $t=\lambda+1$ and $k=w$.
\end{proof}

More generally, let $A$ be a group of order $v$, and consider families of
$w$-subsets of $A$ that have the autocorrelation property and the
cross-correlation property with respect to left translation by $A$. For
$\lambda\ge 2$ and $w\ge\lambda+2$, the largest size of a family with these
two properties is
$(1-o(1))\binom{v}{\lambda+1}/(v\binom{w}{\lambda+1})$, for every group $A$.
For $\lambda=1$ and $w\ge 3$, the largest size is
$(1-o(1))\binom{v}{2}/(v\binom{w}{2})$ if and only if $A$ has $o(v)$
involutions, by Theorem~\ref{thm:geh} and Proposition~\ref{prop:pairs}.
Example~\ref{ex:elementary} shows that, for $\lambda=1$ and $w\ge 2$, no
nonempty family of $w$-subsets of an elementary abelian $2$-group has the two
properties. Huczynska and Ng \cite{HuczynskaNg} ask for constructions of
OOCs over arbitrary groups, in particular over nonabelian
groups.

\subsection{Cyclic and Quasi-Cyclic Constant-Weight Codes}\label{sec:cw}

A binary word of length $n$ and weight $w$ is identified with its support, a
$w$-subset of $\ZZ_n$. Two words of weight $w$ are at Hamming distance at
least $2\delta$ if and only if the intersection of their supports has at most
$w-\delta$ points. Hence a binary code of length $n$, constant weight $w$ and minimum
distance at least $2\delta$ is a $(w-\delta+1)$-$(n,w,1)$ packing. Let
$A(n,2\delta,w)$ be the largest size of a binary code of length $n$,
constant weight $w$ and minimum distance at least $2\delta$. The bound
\eqref{eq:trivial} is the first-order form of the Johnson bound
\cite{Johnson}, and R\"odl's theorem shows that it is asymptotically attained.
Liu and Shangguan \cite{LiuShangguan} used matchings in hypergraphs to
determine the asymptotic maximum size of $q$-ary constant-weight codes of odd
minimum distance.

Let $\ell$ be a divisor of $n$. A code is \emph{quasi-cyclic of index $\ell$}
if it is invariant under the cyclic shift by $\ell$ positions. The case
$\ell=1$ is that of cyclic codes. The shift by $\ell$ positions generates a
semiregular group $G$ of order $n/\ell$ on $\ZZ_n$.

\begin{corollary}\label{cor:cyclic}
Let $w$ and $\delta$ be integers with $2\le\delta\le w-1$, let $\ell$ be a
divisor of $n$, and let $G$ be the group generated by the cyclic shift of
$\ZZ_n$ by $\ell$ positions. The maximum number of codewords in a binary
quasi-cyclic code of index $\ell$, length $n$, constant weight $w$ and
minimum distance at least $2\delta$, in which every codeword has $n/\ell$
distinct images under $G$, is
\[
 P_G(n,w,w-\delta+1).
\]
Consequently, this maximum is
\[
 (1-o(1))\binom{n}{w-\delta+1}\Big/\binom{w}{w-\delta+1}
 =(1-o(1))\,A(n,2\delta,w)
 \qquad(n\to\infty),
\]
uniformly over all divisors $\ell$ of $n$.
\end{corollary}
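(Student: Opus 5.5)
The plan has two parts: first identify the maximum with $P_G(n,w,w-\delta+1)$, then apply Corollary~\ref{cor:few} with $t=w-\delta+1$ and $k=w$.

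For the identification, note that a binary quasi-cyclic code of index $\ell$ is invariant under the shift by $\ell$ positions, which generates $G$. So, via supports, such a code is a $G$-invariant family $\cB$ of $w$-subsets of $\ZZ_n$. The distance condition says that two distinct supports share at most $w-\delta$ points. By the discussion before the corollary, this means $\cB$ is a $(w-\delta+1)$-$(n,w,1)$ packing. The requirement that every codeword has $n/\ell=|G|$ distinct images says exactly that every block is free. Conversely, every free $G$-invariant $(w-\delta+1)$-$(n,w,1)$ packing gives such a code, because distinct blocks meet in at most $w-\delta$ points. Hence the maximum equals $P_G(n,w,w-\delta+1)$ by the definition of $P_G$.

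For the asymptotics, put $t=w-\delta+1$ and $k=w$. The hypothesis $2\le\delta\le w-1$ gives $t\ge 2$ and $k-t=\delta-1\ge 1$, so $k>t\ge 2$. The group $G$ is a cyclic subgroup of $\ZZ_n$ acting by translation. It is therefore semiregular, and being cyclic it has at most one involution. Corollary~\ref{cor:few} then gives, for every $\varepsilon>0$ and all $n\ge v_1(w,t,\varepsilon)$,
\[
 (1-\varepsilon)\binom{n}{t}\Big/\binom{w}{t}\ \le\ P_G(n,w,t)\ \le\ \binom{n}{t}\Big/\binom{w}{t}.
\]
The threshold $v_1$ depends only on $w$, $t$ and $\varepsilon$, and not on $G$. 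This is what makes the estimate uniform over all divisors $\ell$ of $n$, including the edge cases $\ell=n$ (trivial group) and $\ell=1$ (regular group).

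For the comparison with $A(n,2\delta,w)$, we have $P_G(n,w,t)\le A(n,2\delta,w)\le\binom{n}{t}/\binom{w}{t}$. The second inequality is \eqref{eq:trivial}. The first holds because every such quasi-cyclic code is in particular a constant-weight code. Combining these with the displayed lower bound shows that all three quantities are $(1-o(1))\binom{n}{t}/\binom{w}{t}$.

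No step is a real obstacle. The only care needed is in the identification of codes with packings, where the freeness requirement on codewords must correspond exactly to freeness of blocks; the rest is direct bookkeeping from Corollary~\ref{cor:few}.
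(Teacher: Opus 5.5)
Your proposal is correct and follows the paper's proof. You identify the codes with free $G$-invariant $(w-\delta+1)$-$(n,w,1)$ packings and then apply Corollary~\ref{cor:few}, using that $G$ is cyclic and semiregular and that $v_1$ does not depend on $G$. The one small difference is that you obtain the asymptotics of $A(n,2\delta,w)$ from the sandwich $P_G(n,w,t)\le A(n,2\delta,w)\le\binom{n}{t}/\binom{w}{t}$ rather than by citing R\"odl's theorem, which makes the argument slightly more self-contained.
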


\begin{proof}
Put $t=w-\delta+1$. A binary quasi-cyclic code of index $\ell$, length $n$,
constant weight $w$ and minimum distance at least $2\delta$, in which every
codeword has $n/\ell$ distinct images under $G$, is a $G$-invariant
$t$-$(n,w,1)$ packing in which every block has trivial stabiliser. Hence it
is a free $G$-invariant $t$-$(n,w,1)$ packing, and every free $G$-invariant
$t$-$(n,w,1)$ packing is a code with these properties. Therefore the maximum
number of codewords is $P_G(n,w,t)$. The group $G$ is cyclic, and
$w>t\ge 2$. The asymptotic formula follows from Corollary~\ref{cor:few} with $v=n$ and $k=w$, and from
R\"odl's theorem for $A(n,2\delta,w)$.
\end{proof}

For $\ell=1$, a set of orbit representatives of the code in
Corollary~\ref{cor:cyclic} is a cyclically permutable constant-weight code in
the sense of Nguyen, Gy\"orfi and Massey \cite{NguyenGyorfiMassey}.
By Lemma~\ref{lem:ooc-packing}, it is also an $(n,w,w-\delta)$ OOC, and
$P_G(n,w,w-\delta+1)=n\,\Phi(n,w,w-\delta)$. Hence, for $\ell=1$, the
asymptotic formula in Corollary~\ref{cor:cyclic} was known for every sequence
of lengths $n$ for which asymptotically optimal $(n,w,w-\delta)$ OOCs were
known. For example, the cyclic difference families of Zhao et al.\
\cite[Theorem~1.2]{ZhaoEtAl} give an $(n,4,1)$ OOC with
$\lfloor(n-1)/12\rfloor$ codewords for all $n\ge 14$ with
$n\equiv 2,3\pmod{12}$ and $n\ne 27$. For $\ell=n$, the group
$G$ is trivial, and the asymptotic formula is R\"odl's theorem.
Corollary~\ref{cor:cyclic} gives the asymptotic formula for all $w$, all
$\delta$ with $2\le\delta\le w-1$, all $n$ and all divisors $\ell$ of $n$.

A quasi-cyclic code of index $\ell$ may contain codewords with fewer than
$n/\ell$ distinct images under $G$. Let $Q$ be the maximum number of
codewords in a binary quasi-cyclic code of index $\ell$, length $n$,
constant weight $w$ and minimum distance at least $2\delta$. Then
$P_G(n,w,w-\delta+1)\le Q\le A(n,2\delta,w)$. By
Corollary~\ref{cor:cyclic}, we have $Q=(1-o(1))\,A(n,2\delta,w)$ as
$n\to\infty$, uniformly over all divisors $\ell$ of $n$.

\subsection{Group Divisible Packings and OOCs with At Most One Pulse per Wavelength}\label{sec:gd}

A $t$-$(v,k,1)$ packing is a packing of the $t$-subsets of $V$ by
$k$-subsets. OOCs with at most one pulse per wavelength correspond to packings
of a different kind. Let $\Lambda$ and $T$ be positive integers, and put
$V=\ZZ_\Lambda\times\ZZ_T$ and $v=\Lambda T$. The sets $\{i\}\times\ZZ_T$,
$i\in\ZZ_\Lambda$, are the \emph{parts} of $V$. A subset of $V$ is a
\emph{transversal} if it has at most one point in each part. Let
$k\ge t\ge 1$ be integers. A \emph{group divisible $t$-packing} on $V$ with
block size $k$ is a set $\cB$ of transversal $k$-subsets of $V$, called
blocks, such that every transversal $t$-subset of $V$ lies in at most one
block. The term comes from design theory, where the parts are called groups.
Every $t$-subset of a block is a transversal, and there are
$\binom{\Lambda}{t}T^{t}$ transversal $t$-subsets. Hence
\begin{equation}\label{eq:gd-bound}
 |\cB|\le\binom{\Lambda}{t}T^{t}\Big/\binom{k}{t}.
\end{equation}
A group divisible $t$-packing with block size $k$ is a $t$-$(v,k,1)$ packing,
all of whose blocks are transversals, and conversely.

Let $G=\ZZ_T$ act on $V$ by translation of the second coordinate. For
$A\subseteq V$ and $a\in\ZZ_T$, write $A+a$ for the image of $A$. This action
is semiregular, and it maps transversals to transversals. A transversal $A$
with at least one point satisfies $A\cap(A+a)=\emptyset$, for all
$a\in\ZZ_T\setminus\{0\}$. Hence every nonempty transversal has trivial stabiliser,
and every $G$-invariant group divisible packing is free. Let $\cF_k$ be the
set of all transversal $k$-subsets of $V$. Then $P_G(\cF_k,t)$ is the maximum
number of blocks in a $G$-invariant group divisible $t$-packing on $V$ with
block size $k$.

\begin{theorem}[$\ZZ_T$-invariant group divisible packings]\label{thm:gd}
Let $k>t\ge 2$ be integers. Then
\[
 P_G(\cF_k,t)=(1-o(1))\binom{\Lambda}{t}T^{t}\Big/\binom{k}{t}
 \qquad(\Lambda T\to\infty),
\]
uniformly over all $\Lambda\ge k$ and all $T\ge 1$.
\end{theorem}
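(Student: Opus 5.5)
The plan is to apply Proposition~\ref{prop:restricted} to a family $\cF$ of $t$-separated transversal $k$-subsets. The upper bound is \eqref{eq:gd-bound}, so only the lower bound needs proof. I put $D=\binom{\Lambda-t}{k-t}T^{k-t}$. This is the number of transversal $k$-subsets that include a given transversal $t$-subset, and no non-transversal $t$-subset lies in any transversal $k$-set. Since $|\cF_k|=\binom{\Lambda}{k}T^k$ and $\binom{\Lambda}{k}/\binom{\Lambda-t}{k-t}=\binom{\Lambda}{t}/\binom{k}{t}$, we get $|\cF_k|/D=\binom{\Lambda}{t}T^t/\binom{k}{t}$. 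It therefore suffices to show two things. First, all but a $o(1)$ proportion of $\cF_k$ are $t$-separated. Second, for transversal $t$-subsets $T_0,S$ in distinct orbits, the number of transversal $k$-sets containing $T_0$ and some $S+a$ is at most $\delta D$ once $\Lambda T$ is large. Both bounds must be uniform in $\Lambda\ge k$ and $T$. I would then take $\cF$ to be the $t$-separated members of $\cF_k$. This family is $G$-invariant, by the remark before the definition of $\cH_k$.

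For the separation count, I let $\rB$ be uniform in $\cF_k$. Its set of parts $I$ is a uniform $k$-subset of $\ZZ_\Lambda$, and its coordinates in these parts are independent and uniform in $\ZZ_T$. For $a\ne0$ we need $|\rB\cap(\rB+a)|\ge t$. A common point $(i,x)$ forces $x=y+a$ where $(i,y)\in\rB$, which is impossible since $\rB$ has one point per part. So $\rB\cap(\rB+a)=\emptyset$, every transversal is automatically $t$-separated, and $\cF=\cF_k$ with no loss. This is the key simplification, and it is why no $\nu_t$ term appears.

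For the codegrees, I fix transversal $t$-sets $T_0$ and $S$ in distinct orbits, with part sets $I_0$ and $J$. I count the pairs $(a,B)$ with $T_0\cup(S+a)\subseteq B$ and $B$ transversal, and then divide by $D$. There are two cases. In the first case $J=I_0$. Then $S+a\ne T_0$ for all $a$, so $S+a$ differs from $T_0$ in some part, and no transversal contains both. The count is zero. In the second case $J\ne I_0$. Put $r=|J\setminus I_0|\ge1$. The part set of $B$ must contain $I_0\cup J$, which has probability $O(\Lambda^{-r})$ given $I_0$. The points of $B$ in $J\setminus I_0$ are then forced by $a$, and the shift $a$ must also agree with $T_0$ on $J\cap I_0$. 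If $J\cap I_0\ne\emptyset$, then at most one value of $a$ works, and the conditional probability is at most $O(\Lambda^{-r}T^{-r})\le O(1/(\Lambda T))$. If $J\cap I_0=\emptyset$, then $T$ values of $a$ are possible and each forces $r=t$ points. This gives $T\cdot O(\Lambda^{-t}T^{-t})=O(\Lambda^{-t}T^{1-t})\le O(1/(\Lambda T))$ when $t\ge2$. In both subcases the codegree is $O_{k,t}(D/(\Lambda T))$, and this is at most $\delta D$ once $\Lambda T\ge c/\delta$.

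Proposition~\ref{prop:restricted} with $\varepsilon$ then gives $P_G(\cF_k,t)\ge\binom{\Lambda}{t}T^t/((1+\varepsilon)\binom{k}{t})$ for all $\Lambda\ge k$ with $\Lambda T$ large, uniformly. The main obstacle is the uniformity of the codegree estimate. Neither $\Lambda$ nor $T$ need tend to infinity individually, so the estimate has to gain the full factor $1/(\Lambda T)$ in each subcase. The subcase $r<t$ with $J\cap I_0=\emptyset$ cannot occur, since disjointness forces $r=t$. Without that observation one would be left with only $T\Lambda^{-r}T^{-r}$, which could fail to be small when $r=1$ and $\Lambda$ is bounded.
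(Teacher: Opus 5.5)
Your proof is correct and follows the paper's route. You apply Proposition~\ref{prop:restricted} to all of $\cF_k$ with $D=\binom{\Lambda-t}{k-t}T^{k-t}$, use that every transversal satisfies $B\cap(B+a)=\emptyset$ for $a\ne 0$ and so is $t$-separated, and bound codegrees by $O_{k,t}(D/(\Lambda T))$ using $t\ge 2$; the only difference is bookkeeping, since you split the codegree count by part sets ($J=I_0$, partial overlap, disjoint), whereas the paper splits by whether $S+a$ meets $T_0$ (at most $t^2$ shifts costing $\rho D$ each, the rest $\rho^{t}D$ each) via the bound $\rho^{j-t}D$ on the number of members of $\cF_k$ containing a $j$-set.
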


\begin{proof}
The upper bound is \eqref{eq:gd-bound}. For the lower bound, we apply Proposition~\ref{prop:restricted} to $\cF=\cF_k$. We
have $|\cF|=\binom{\Lambda}{k}T^{k}$, and $B+a\in\cF$, for all
$B\in\cF$ and all $a\in\ZZ_T$. Every $B\in\cF$ satisfies
$B\cap(B+a)=\emptyset$, for all $a\in\ZZ_T\setminus\{0\}$, so it is
$t$-separated. A transversal of size $j\le k$ is included in exactly
$\binom{\Lambda-j}{k-j}T^{k-j}$ members of $\cF$, and a subset of $V$ that is
not a transversal is included in none. Put
\[
 D=\binom{\Lambda-t}{k-t}T^{k-t}.
\]
Then every $t$-subset of $V$ is included in at most $D$ members of $\cF$. For
$t\le j<k$, the ratio of $\binom{\Lambda-j-1}{k-j-1}T^{k-j-1}$ to
$\binom{\Lambda-j}{k-j}T^{k-j}$ is $(k-j)/((\Lambda-j)T)$. Since
$\Lambda\ge k$, this ratio is at most $\rho=(k-t)/((\Lambda-t)T)$, and
$\rho\le k/v$. Hence a subset of $V$ of size $j\ge t$ is included in at most
$\rho^{j-t}D$ members of $\cF$.

Let $T_0$ and $S$ be $t$-subsets of $V$ in distinct orbits of $\ZZ_T$. At most
$t^2$ elements $a\in\ZZ_T$ satisfy $(S+a)\cap T_0\ne\emptyset$. For each of
them $|T_0\cup(S+a)|\ge t+1$, and at most $\rho D$ members of $\cF$ include
$T_0\cup(S+a)$. At most $T$ elements $a$ satisfy
$(S+a)\cap T_0=\emptyset$. For each of them $|T_0\cup(S+a)|=2t$, and at most
$\rho^{t}D$ members of $\cF$ include $T_0\cup(S+a)$. Now
$T\rho^{t}\le k^{t}/(\Lambda^{t}T^{t-1})\le k^{t}/v$, because $t\ge 2$.
Hence at most $(t^{2}k+k^{t})D/v$ members of $\cF$ include $T_0$ and an image
of $S$.

Let $\varepsilon>0$, and let $\delta$ be the number given by
Theorem~\ref{thm:kahn} for $s=\binom{k}{t}$ and $\varepsilon$. Let
$v\ge(t^{2}k+k^{t})/\delta$. Proposition~\ref{prop:restricted} gives
\[
 P_G(\cF_k,t)\ \ge\ \frac{\binom{\Lambda}{k}T^{k}}{(1+\varepsilon)D}
 =\frac{1}{1+\varepsilon}\binom{\Lambda}{t}T^{t}\Big/\binom{k}{t}.
\]
\end{proof}

Theorem~\ref{thm:gd} has no condition on the stabilisers of $t$-subsets,
because every nonempty transversal has trivial stabiliser. The proof of
Theorem~\ref{thm:gd} uses only the action of $\ZZ_T$ on the second coordinate.
Hence Theorem~\ref{thm:gd} holds with
$\ZZ_\Lambda$ replaced by an arbitrary set of $\Lambda$ elements.

A $(\Lambda\times T,w,\lambda)$ OOC, as defined in
Section~\ref{sec:ooc} with $d=2$, has \emph{at most one pulse per wavelength}
if every codeword is a transversal. The coordinate in $\ZZ_\Lambda$ represents
the wavelength. A $w$-subset $C$ of $V$ is a transversal if and only if
$C\cap(C+a)=\emptyset$, for all $a\in\ZZ_T\setminus\{0\}$. Hence the OOCs
with at most one pulse per wavelength are the $(\Lambda\times T,w,\lambda)$
OOCs with autocorrelation zero.
Let $\Phi^{*}(\Lambda\times T,w,\lambda)$ be the maximum size of a
$(\Lambda\times T,w,\lambda)$ OOC with at most one pulse per wavelength.

\begin{corollary}\label{cor:amopp}
We have
\[
 \Phi^{*}(\Lambda\times T,w,\lambda)=\frac{1}{T}\,P_G(\cF_w,\lambda+1).
\]
Consequently, for all integers $\lambda\ge 1$ and $w\ge\lambda+2$,
\[
 \Phi^{*}(\Lambda\times T,w,\lambda)=(1-o(1))\,
 \binom{\Lambda}{\lambda+1}T^{\lambda}\Big/\binom{w}{\lambda+1}
 \qquad(\Lambda T\to\infty),
\]
uniformly over all $\Lambda\ge w$ and all $T\ge 1$.
\end{corollary}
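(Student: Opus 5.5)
The proof has two parts: an exact identification of $\Phi^{*}$ with a restricted packing number, and then Theorem~\ref{thm:gd} with $k=w$, $t=\lambda+1$.

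\textbf{The identity.} I would repeat the argument of Lemma~\ref{lem:ooc-packing}, with $\ZZ_T$ acting on $V=\ZZ_\Lambda\times\ZZ_T$, exactly as in the proof of Corollary~\ref{cor:nd}. That argument shows that a family $\cC$ of $w$-subsets of $V$ is a $(\Lambda\times T,w,\lambda)$ OOC if and only if the sets $C+a$, with $C\in\cC$ and $a\in\ZZ_T$, form a free $G$-invariant $(\lambda+1)$-$(\Lambda T,w,1)$ packing with $T|\cC|$ blocks. It also shows that every such packing arises in this way from a set of orbit representatives.

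Next I add the transversal condition. Translation by $a\in\ZZ_T$ preserves parts, so $C$ is a transversal if and only if every $C+a$ is. Hence OOCs with at most one pulse per wavelength correspond exactly to free $G$-invariant packings all of whose blocks lie in $\cF_w$.

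Such a packing is precisely a $G$-invariant group divisible $(\lambda+1)$-packing with block size $w$. This uses two facts: a $(\lambda+1)$-$(v,w,1)$ packing with transversal blocks is a group divisible packing, and conversely; and every such packing is automatically free, as noted before Theorem~\ref{thm:gd}. Taking maxima gives $\Phi^{*}=P_G(\cF_w,\lambda+1)/T$.

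\textbf{The asymptotics.} With $t=\lambda+1\ge 2$ and $k=w>t$, Theorem~\ref{thm:gd} gives
\[
P_G(\cF_w,\lambda+1)=(1-o(1))\binom{\Lambda}{\lambda+1}T^{\lambda+1}\Big/\binom{w}{\lambda+1},
\]
uniformly over $\Lambda\ge w$ and $T\ge 1$. Dividing by $T$ gives the stated formula, and the uniformity carries over unchanged.

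\textbf{Main obstacle.} The only point needing care is the bookkeeping in the bijection between codes and packings. I must check that the number of blocks is exactly $T|\cC|$, that is, the translates $C+a$ are pairwise distinct. This follows from $\lambda<w$, as in Lemma~\ref{lem:ooc-packing}. I must also check that the correspondence maps transversal codes onto packings contained in $\cF_w$, which is the parts-preservation argument above. Everything else is a direct citation.
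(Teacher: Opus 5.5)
Your proposal is correct and follows essentially the same route as the paper. The paper also transfers the argument of Lemma~\ref{lem:ooc-packing} to $\ZZ_T$ acting on $\ZZ_\Lambda\times\ZZ_T$, identifies codes with at most one pulse per wavelength with $G$-invariant group divisible $(\lambda+1)$-packings with block size $w$ (which are automatically free), and then applies Theorem~\ref{thm:gd} with $k=w$ and $t=\lambda+1$, dividing by $T$.
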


\begin{proof}
The proof of Lemma~\ref{lem:ooc-packing} applies with $\ZZ_T$ in place of
$\ZZ_v$ and $V$ in place of $\ZZ_v$. This proof shows that a family $\cC$ of transversal $w$-subsets of $V$ is a code with at most one pulse per
wavelength if and only if the sets $C+a$, with $C\in\cC$ and $a\in\ZZ_T$,
form a $G$-invariant group divisible $(\lambda+1)$-packing with block size
$w$ and with $T|\cC|$ blocks. It also shows that every $G$-invariant group
divisible $(\lambda+1)$-packing with block size $w$ consists of the sets
$C+a$, with $C\in\cC$ and $a\in\ZZ_T$, for a family $\cC$ of transversal
$w$-subsets of $V$. This proves the identity
$\Phi^{*}(\Lambda\times T,w,\lambda)=P_G(\cF_w,\lambda+1)/T$. The asymptotic
formula follows from Theorem~\ref{thm:gd} with
$t=\lambda+1$ and $k=w$.
\end{proof}

The upper bound in Corollary~\ref{cor:amopp} is the Johnson bound of
\cite{Omrani2D} for codes with at most one pulse per wavelength, without the
floor functions. The lower bound was known for the following parameters.
Let $T$ be a prime
with $T\ge w$. From a binary code of length $\Lambda$, constant weight $w$ and
minimum distance $2(w-\lambda)$ with $M$ codewords, the concatenation
construction of \cite{Omrani2D} gives a code with at most one pulse per
wavelength and $T^{\lambda}M$ codewords. By R\"odl's theorem, this proves the
lower bound in Corollary~\ref{cor:amopp} for primes $T\ge w$ and
$\Lambda\to\infty$.

Let $w=3$ and $\lambda=1$. A \emph{semicyclic group
divisible design} with block size three and with $\Lambda'$ parts of size $T$
is a $G$-invariant group divisible $2$-packing on
$\ZZ_{\Lambda'}\times\ZZ_T$ with block size three in which every transversal
$2$-subset lies in a block. Such a design has $\binom{\Lambda'}{2}T^{2}/3$ blocks. Wang and Yin \cite[Theorem~6]{WangYin} proved that it is equivalent to a
$(\Lambda'\times T,3,1)$ OOC with at most one pulse per wavelength and
$\binom{\Lambda'}{2}T/3$ codewords. Gallant, Jiang
and Ling \cite[Theorem~2]{GallantJiangLing} determined the parameters
$\Lambda'$ and $T$ for which such a design exists, and the case $T\equiv 0\pmod 6$ is
due to Jiang \cite{Jiang}. By their theorem, such a design exists for all $T\ge 1$ and
all $\Lambda'\ge 13$ with $\Lambda'\equiv 1\pmod{12}$. Let $\Lambda\ge 13$, and
let $\Lambda'$ be the largest integer with $\Lambda'\le\Lambda$ and
$\Lambda'\equiv 1\pmod{12}$. Then $\Lambda-\Lambda'\le 11$, and hence
\[
 \Phi^{*}(\Lambda\times T,3,1)\ \ge\ \binom{\Lambda'}{2}\frac{T}{3}
 =\bigl(1-O(1/\Lambda)\bigr)\binom{\Lambda}{2}\frac{T}{3},
\]
uniformly over all $T\ge 1$. This proves the lower bound in
Corollary~\ref{cor:amopp} for $w=3$, $\lambda=1$ and $\Lambda\to\infty$.

Corollary~\ref{cor:amopp} holds for all $\lambda\ge 1$ and all
$w\ge\lambda+2$, it has no condition on $T$, and it includes the case in
which $\Lambda$ is bounded and $T\to\infty$. By the
remark after Theorem~\ref{thm:gd}, the corollary applies to codes of every
dimension with autocorrelation zero. Alderson \cite{AldersonIdeal,Alderson}
proved the corresponding Johnson bound for codes of every dimension with
autocorrelation zero, with $\Lambda$ equal
to the product of the sizes of the coordinates other than time.

\section{Concluding Remarks}

The existence of packings and designs with a prescribed group of
automorphisms is a classical problem of design theory.
Theorem~\ref{thm:geh} gives asymptotic equality in \eqref{eq:trivial} for
free $G$-invariant packings, for every semiregular group $G$ with
$\nu_t(G)=o(v^t)$. The existence of Steiner systems with a prescribed
semiregular group of automorphisms is open in general.

Theorem~\ref{thm:general} applies to groups that are not semiregular.
An example is the dihedral group $G$ of order $2v$ acting on $\ZZ_v$. For all $x,y\in\ZZ_v$, exactly two elements of $G$ map $x$ to $y$. The
argument for $\beta_{k,t}(G)$ in the proof of Lemma~\ref{lem:semireg} uses
only a bound on the number of elements that map $x$ to $y$, and it gives
$\beta_{k,t}(G)\le(2t^2k+2k^{t})/v$. Let $t\ge 3$. The rotations form a
regular cyclic group, and their contribution to \eqref{eq:alpha-sum} is
$O(1/v)$ by Lemmas~\ref{lem:nu} and~\ref{lem:semireg}. A reflection $g$ has at most two points $x$ with $gx=x$, and all its other cycles have length two. Let $T$
be a $t$-subset of $\ZZ_v$. Let $a$ be the number of points $x\in T$ with
$gx=x$, let $b$ be the number of cycles of $g$ of length two that are
included in $T$, and let $c$ be the number of the remaining points of $T$.
Then $a+2b+c=t$ and $|T\cup gT|=a+2b+2c$. For given $a$, $b$ and $c$, there
are $O(v^{b+c})$ sets $T$, and each contributes at most $(k/v)^{a+2b+2c}$ to
\eqref{eq:alpha-sum} by \eqref{eq:prob}. Summation over the $v$ reflections
gives $O(v^{1-a-b-c})$. Since $t\ge 3$, we have $a+b+c\ge 2$. Hence $\alpha_{k,t}(G)=O(1/v)$, for $t\ge 3$. By
Theorem~\ref{thm:general}, we obtain
$P_G(v,k,t)=(1-o(1))\binom{v}{t}/\binom{k}{t}$ as $v\to\infty$, for all
constant $k>t\ge 3$. In contrast, for $t=2$, every
$2$-subset $\{x,y\}$ of $\ZZ_v$ is mapped to itself by the reflection
$z\mapsto x+y-z$. Hence $\nu_2(G)=\binom{v}{2}$, and $P_G(v,k,2)=0$ by
Proposition~\ref{prop:pairs}.

Whether the hypothesis on $\beta_{k,t}(G)$ in
Theorem~\ref{thm:general} is necessary is an open problem. Equivalently, the problem is whether $\alpha_{k,t}(G)\to 0$ alone implies $P_G(v,k,t)=(1-o(1))\binom{v}{t}/\binom{k}{t}$.

\section*{Acknowledgement}

Research supported by SUTD Grant SKI 2021\_07\_04.

\end{document}